\makeatletter \@addtoreset{equation}{section} \makeatother
\newcommand{\eref}[1]{(\ref{#1})}
\newcommand{\tref}[1]{Theorem \ref{#1}}
\newcommand{\pref}[1]{Proposition \ref{#1}}
\newcommand{\lref}[1]{Lemma \ref{#1}}
\newcommand{\cref}[1]{Corollary \ref{#1}}
\newcommand{\sref}[1]{Section \ref{#1}}
\theoremstyle{plain} \newtheorem{theorem}{Theorem}[section] \newtheorem{lemma}{Lemma}[section] \newtheorem{proposition}{Proposition}[section] 
\theoremstyle{definition} \newtheorem{remark}{Remark}[section] 
\colorlet{RED}{red}
\title[$L^\infty$-error estimate for isoparametric FEM]{Optimal $L^\infty$-error estimate for isoparametric\\ finite element method in a smooth domain}
\author[Takahito Kashiwabara]{Takahito Kashiwabara}
\address{Graduate School of Mathematical Sciences, The University of Tokyo, 3-8-1 Komaba, Meguro, 153-8914 Tokyo, Japan}
\email{tkashiwa@ms.u-tokyo.ac.jp}
\newcommand{\RED}[1]{\textcolor{black}{#1}}
\begin{document}
% abstract
\begin{abstract}
We consider the isoparametric finite element method (FEM) for the Poisson equation in a smooth domain with the homogeneous Dirichlet boundary condition.
Because the boundary is curved, standard triangulated meshes do not exactly fit it.
Thereby we need to introduce curved elements if better accuracy than linear FEM is desired, which necessitates the use of isoparametric FEMs.
We establish optimal rate of convergence $O(h^{k+1})$ in the $L^\infty$-norm for $k \ge 2$, by extending the approach of our previous work [Kashiwabara and Kemmochi, Numer.\ Math.\ \textbf{144}, 553--584 (2020)] developed for Neumann boundary conditions and $k = 1$.
\end{abstract}
% title
\maketitle

% 1
\section{Introduction}
Let $\Omega \subset \mathbb R^N \, (N = 2, 3)$ be a bounded domain of $C^{k+1,1}$-class ($k \ge 1$), with $\Gamma := \partial\Omega$.
We consider the following elliptic problem with the Dirichlet boundary condition:
\begin{align*}
	-\Delta u = f \quad \text{in} \quad \Omega, \qquad u = 0 \quad \text{on} \quad \Gamma.
\end{align*}
Its weak form reads, with $(\cdot, \cdot)_D$ denoting the $L^2(D)$-inner product,
\begin{equation} \label{eq: conti prob}
	(\nabla u, \nabla v)_\Omega = (f, v)_\Omega \quad \forall v \in H^1_0(\Omega).
\end{equation}
Throughout this paper, we assume that the solution $u$ of \eqref{eq: conti prob} is sufficiently smooth, especially, $u \in W^{k+1, \RED{\infty}}(\Omega)$ (hence $f \in W^{k-1, \RED{\infty}}(\Omega)$).

Let $\Omega_h$ be an approximate domain of $\Omega$ and $\mathring V_h \subset H^1_0(\Omega_h)$ be an approximate function space constructed by the isoparametric finite element method (FEM) of order $k \ge 2$, the details of which are given in Section \ref{sec2}.
The parameter $h > 0$ denotes the mesh size associated with $\Omega_h$ and $\mathring V_h$ (see \eqref{eq: def of h} below for the precise definition).
Then the finite element solution $u_h \in \mathring V_h$ is obtained by solving
\begin{equation} \label{eq: apx prob}
	a_h(u_h, v_h) := (\nabla u_h, \nabla v_h)_{\Omega_h} = (\tilde f, v_h)_{\Omega_h} \quad \forall v_h \in \mathring V_h,
\end{equation}
where $\tilde f \in W^{k-1, \infty}(\mathbb R^N)$ is an arbitrary extension of $f \in W^{k-1, \infty}(\Omega)$ satisfying the stability condition $\|\tilde f\|_{W^{k-1, \infty}(\mathbb R^N)} \le C \|f\|_{W^{k-1, \infty}(\Omega)}$.
The unique existence of the approximate solution $u_h$ of \eqref{eq: apx prob} is ensured by the Lax--Milgram theorem.
With this setting, we state the main result of this paper as follows.
\begin{theorem} \label{main thm}
	Let $k \ge 2$ and $\tilde u \in W^{k+1, \infty}(\mathbb R^N)$ be an arbitrary extension of $u$ such that $\|\tilde u\|_{W^{k+1, \infty}(\mathbb R^N)} \le C \|u\|_{W^{k+1, \infty}(\Omega)}$.
	Then we have
	\begin{equation*}
		\|\tilde u - u_h\|_{L^\infty(\Omega_h)} \le Ch^{k+1} \|u\|_{W^{k+1, \infty}(\Omega)},
	\end{equation*}
	where the constants $C$ depend only on $N$, $\Omega$, and $k$.
\end{theorem}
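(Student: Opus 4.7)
The plan is to split the error through an isoparametric Lagrange interpolant $I_h \tilde u$ and reduce the analysis to a Galerkin-type defect that is attacked by duality with a regularized Green's function. Writing $\tilde u - u_h = (\tilde u - I_h\tilde u) + (I_h\tilde u - u_h) =: \eta + \xi$, the interpolation part $\eta$ is controlled by the classical $L^\infty$ interpolation estimate for isoparametric elements, giving $\|\eta\|_{L^\infty(\Omega_h)} \le Ch^{k+1}\|u\|_{W^{k+1,\infty}(\Omega)}$. The real work is to show that $\xi \in \mathring V_h$ satisfies the same bound, and to do this I would pick an arbitrary $z \in \overline{\Omega_h}$ and recover $\xi(z)$ via a discrete dual problem.

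Concretely, I would introduce a regularized delta function $\delta_h^z$ supported in a ball of radius $O(h)$ around $z$ so that $\xi(z) = (\xi, \delta_h^z)_{\Omega_h} + O(h^{k+1})\|u\|_{W^{k+1,\infty}}$, and let $g_h^z \in \mathring V_h$ denote the discrete solution of the adjoint problem with data $\delta_h^z$. Then $(\xi, \delta_h^z)_{\Omega_h} = a_h(\xi, g_h^z)$, and by substituting the definitions of $\xi$ and $u_h$, this quantity equals a sum of consistency residuals: (a) the domain-perturbation term $(\nabla \tilde u, \nabla g_h^z)_{\Omega_h} - (\nabla \tilde u, \nabla g_h^z)_\Omega$ produced by integration against $-\Delta \tilde u = \tilde f$ on $\Omega$, (b) a corresponding perturbation of the right-hand side $(\tilde f, g_h^z)_{\Omega_h\triangle\Omega}$, (c) the interpolation error $a_h(\eta, g_h^z)$, and (d) a boundary trace $\int_{\partial\Omega_h} (\partial_n g_h^z)\,\tilde u\,dS$ coming from the fact that $\tilde u$ does not vanish on $\partial\Omega_h$.

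The central quantitative input is a uniform (no $|\log h|$) $W^{1,1}$-type stability bound for $g_h^z$ on $\Omega_h$, together with a weighted trace bound for $\partial_n g_h^z$ on $\partial\Omega_h$, extending the analogous Neumann estimates of Kashiwabara--Kemmochi. Assuming these, each of terms (a)--(d) can be closed: (a) and (b) live on the symmetric difference $\Omega \triangle \Omega_h$, which has width $O(h^{k+1})$ because the isoparametric boundary approximates $\Gamma$ to order $k+1$, so factoring out $\|\nabla \tilde u\|_{L^\infty}$ or $\|\tilde f\|_{L^\infty}$ and using $\|g_h^z\|_{W^{1,1}(\Omega_h)} \lesssim 1$ gives $O(h^{k+1})$; (c) is routine given the stability bound; for (d) one uses $|\tilde u| \le C h^{k+1}\|\nabla \tilde u\|_{L^\infty}$ on $\partial\Omega_h$, since $u$ vanishes on $\Gamma$ and $\partial\Omega_h$ is within $O(h^{k+1})$ of $\Gamma$.

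The main obstacle I anticipate is exactly this Dirichlet-specific trace term (d) together with the underlying $W^{1,1}$-stability of $g_h^z$ on the perturbed domain. On one hand, extending the duality/weighted-norm machinery of the earlier $k=1$ Neumann paper to order $k\ge 2$ in the Dirichlet setting requires careful local energy and dyadic decomposition arguments where the boundary is now curved and only approximated to order $k+1$; on the other hand, the trace $\partial_n g_h^z$ on $\partial\Omega_h$ does not have an obvious $L^1$ bound unless one combines a discrete Caccioppoli inequality on a boundary strip with the Green's function decay, and making this quantitatively sharp enough to match the $O(h^{k+1})$ factor coming from $\tilde u|_{\partial\Omega_h}$ without loss is where I expect the technical core of the paper to lie.
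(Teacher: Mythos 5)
Your high-level strategy does coincide with the paper's: a regularized delta function, duality with a regularized Green's function, Dirichlet-specific Green's function estimates to remove the logarithm, and confinement of the geometric error to the skin layer $\Omega_h\setminus\Omega$ of width $\delta=O(h^{k+1})$. However, there is a genuine gap: the entire quantitative core --- the uniform bound $\|\nabla(\tilde g - g_h)\|_{L^1(\Omega_h)}\le Ch$ for the difference between the continuous and discrete regularized Green's functions --- is assumed rather than proved, and you yourself flag it as the ``technical core.'' In the paper this occupies Sections \ref{sec5} and \ref{sec6}: a dyadic decomposition of $\Omega_h$ around $\bm z$, the weighted $H^1$-estimate \eqref{eq: weighted H1} closed by the weighted $L^2$-duality estimate \eqref{eq: weighted L2}, and the localized interpolation estimates for $\mathring I_h$ (Propositions \ref{prop: I - mathring Ih global} and \ref{prop: I - mathring Ih local}) needed because a function vanishing on $\Gamma$ does not vanish on $\Gamma_h$. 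The hypothesis $k\ge 2$ enters exactly there (the factor $h^2d_j^{-1}$ in \eqref{eq: k=2 is essential}, which for $k=1$ degrades to $h$ and reintroduces $|\log h|$ after summation), so without this part the optimal rate is not established.

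Two further points in your outline would not go through as written. First, $\xi=I_h\tilde u-u_h$ is \emph{not} in $\mathring V_h$: $\tilde u$ does not vanish at the boundary nodes on $\Gamma_h$ (it is only $O(\delta)$ there), so the discrete adjoint identity $(\xi,\delta_h^z)_{\Omega_h}=a_h(\xi,g_h^z)$ fails; one must work with $\mathring I_h$ and control $I_h-\mathring I_h$ (Lemma \ref{lem: Ih - mathring Ih}). Second, your trace term (d) carries $\partial_{\bm n}g_h^z$, the normal derivative of the \emph{discrete} Green's function, for which no direct $L^1(\Gamma_h)$ bound is available. The paper arranges the integration by parts so that the trace term is $(\tilde u,\partial_{\bm n_h}\tilde g)_{\Gamma_h}$ with the \emph{continuous} extended Green's function, and then $\|\nabla\tilde g\|_{L^1(\Gamma_h)}\le C$ follows from the explicit Dirichlet Green's function representation, see \eqref{eq: thanks to DBC}. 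Your variant could in principle be salvaged by an inverse-trace inequality relating $\|\nabla g_h\|_{L^1(\Gamma_h)}$ to $h^{-1}\|\nabla g_h\|_{L^1}$ on a boundary strip together with the very $W^{1,1}$ error bound you have not yet proved, but that is an additional argument you would have to supply.
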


This type of pointwise error estimate has been of importance in theoretical studies of FEM.
For the case $\Omega_h = \Omega$ (thus no domain perturbation), it is known, cf.\ \cite[Theorem 2.1]{Sch98}, that optimal rate of convergence in the $L^\infty$-norm is obtained if one employs quadratic or higher-order approximation (i.e., $k \ge 2$ in our notation) for $\mathring V_h$ and that an additional factor $|\log h|$ has to be included for the linear approximation ($k = 1$).
A similar result which takes into account domain perturbation was obtained in \cite{ScWa82} assuming $\Omega_h \subset \Omega$ and using the maximum principle.
For the isoparametric FEM with $N = 2$ and $k = 2$, a nearly-optimal estimate $O(h^{3-\epsilon})$ was shown by \cite{Wah78}, where effects of numerical integration were also considered.

More recently, an $O(h^2|\log h|)$-estimate was proved for $k = 1$ in \cite{GLX2023}, adopting techniques from \cite{KasKem2020a} developed for a Neumann boundary condition.
An optimal $O(h^k)$-estimate in the $W^{1,\infty}$-norm was presented by \cite{DLM2023} based on the lift approach developed by \cite{EllRan2013}.
Compared to these researches, our main result above shows an optimal estimate $O(h^{k+1})$ in the $L^\infty$-norm without the use of $\Omega_h \subset \Omega$ or the maximum principle, by a natural extension of our previous result \cite{KasKem2020a} to isoparametric FEMs of order $k \ge 2$.
We would like to point out that the choice of the boundary condition is significant because an auxiliary Green's function estimate is available for the Dirichlet boundary condition (cf.\ \cite[Remark B.2]{KasKem2020a}), which enables us to exclude a logarithmic factor in $h$ (see \eqref{eq: thanks to DBC} below).

This paper is organized as follows.
In Sections \ref{sec2} and \ref{sec3}, following \cite[Section 2.2]{Kas2024} we introduce notations related to approximated domains and the isoparametric FEM.
In \sref{sec4}, estimation of $\|\tilde u - u_h\|_{L^\infty(\Omega_h)}$ is reduced to that of $\|\nabla(\tilde g - g_h)\|_{L^1(\Omega_h)}$, where $g$ and $g_h$ denote  a regularized Green function and its finite element approximation, respectively.
Then $\|\nabla(\tilde g - g_h)\|_{L^1(\Omega_h)}$ is addressed by weighted $H^1$- and $L^2$-error estimates as we present in Sections \ref{sec5} and \ref{sec6}.
We remark that interpolation estimates adapted to the Dirichlet boundary condition in approximate domains are non-trivial, which is discussed in detail in Propositions \ref{prop: I - mathring Ih global} and \ref{prop: I - mathring Ih local}.

After the completion of this work, we noticed that an optimal $L^\infty$-error estimate of isoparametric FEM in curvilinear polyhedra has been presented in \cite{LQXY2024}.
However, our strategy to prove \tref{main thm} differs from that of \cite{LQXY2024}, especially in that we neither utilize a maximum principle nor an ``exact triangulation of $\Omega$'' \cite{EllRan2013, Len86} (this requires that the nodes of $\partial\Omega_h$ lie exactly on $\partial\Omega$; cf.\ \cite[Remark 2.2]{Kas2024}).

% 2
\section{Approximation by isoparametric FEM} \label{sec2}
% 2.1
\subsection{Assumptions on $\Omega$}
Since $\Omega$ is $C^{k+1, 1}$-smooth, there exists a system of local coordinates $\{(U_r, \bm y_r, \varphi_r)\}_{r=1}^M$ such that $\{U_r\}_{r=1}^M$ forms an open covering of $\Gamma$, 
$\bm y_r = \RED{{}^t(y_{r,1}, \dots, y_{r,N-1}, y_{r,N}}) = {}^t(\bm y_r', \RED{y_{r,N}})$ is a rotated coordinate of $\bm x$, and $\varphi_r: \Delta_r \to \mathbb R$ gives a graph representation $(\bm y_r', \varphi_r(\bm y_r'))$ of $\Gamma \cap U_r$, where $\Delta_r$ is an open cube in $\mathbb R^{N-1}$. 
In view of $C^{k+1, 1}(\Delta_r) = W^{k+2, \infty}(\Delta_r)$,
\begin{equation*}
	\|(\nabla')^{m} \varphi_r\|_{L^\infty(\Delta_r)} \le C \quad (m = 0, \dots, k+2, \; r = 1, \dots, M)
\end{equation*}
for some constant $C > 0$, where $\nabla'$ means the gradient with respect to $\bm y_r'$.

We also introduce a notion of tubular neighborhoods $\Gamma(\delta) := \{\bm x\in\mathbb R^N \mid \operatorname{dist}(\bm x, \Gamma) \le \delta\}$.
It is known that \RED{(see e.g.\ \cite[Section 14.6]{GiTr98})} there exists $\delta_0>0$, which depends on the $C^{1,1}$-regularity of $\Omega$, such that each $\bm x \in \Gamma(\delta_0)$ admits a unique representation $\bm x = \bar{\bm x} + t \bm n(\bar{\bm x})$ with $\bar{\bm x} \in \Gamma$ and $t \in [-\delta_0, \delta_0]$.
Namely, $\bm\pi : \bm x \mapsto \bar{\bm x}$ is the orthogonal projection to $\Gamma$ and $t$ represents the signed distance function to $\Gamma$.

% 2.2
\subsection{Construction of approximate domains} \label{subsec: approximate domains}
\RED{Similarly to \cite[Section 2]{Kas2024},} we make the following assumptions \ref{H1}--\ref{H8} on finite element partitions and approximate domains.
We start from introducing a quasi-uniform family of triangulations $\{\tilde{\mathcal T}_h\}_{h \downarrow 0}$ of \RED{closed} straight $N$-simplices.
\begin{enumerate}[label=(H\arabic*)]
	\item \label{H1}
	Every $T \in \mathcal{\tilde T}_h$ is affine-equivalent to a fixed reference \RED{closed} simplex $\hat T$ of $\mathbb R^N$, 
	via the affine isomorphism $\tilde{\bm F}_{T}(\hat{\bm x}) = B_{T} \hat{\bm x} + \bm b_{T}$.
	The set $\tilde{\mathcal T}_h$ is mutually disjoint, that is, the intersection of every two different elements is either empty or agrees with their common face of dimension $\le N - 1$.
	
	\item $\{\tilde{\mathcal T}_h\}_{h \downarrow 0}$ is shape-regular and quasi-uniform, that is,
	\begin{equation} \label{eq: def of h}
		h_T \le C \rho_T \quad\text{and}\quad h := \max_{T' \in \RED{\tilde{\mathcal T}_h}} h_{T'} \le C h_T \quad (\forall T \in \tilde{\mathcal T}_h),
	\end{equation}
	where $h_T$ and $\rho_T$ stand for the diameter of $T$ and that of the largest ball contained in $T$, respectively.
\end{enumerate}

Let $\hat\Sigma_k = \{\hat{\bm a}_i\}_{i=1}^{N_k}$ denote the nodes in $\hat T$ of the Lagrange $\mathbb P_k$-finite element (see e.g.\ \cite[Section 2.2]{Cia78}).
The nodal basis functions $\hat\phi_i \in \mathbb P_k(\hat T)$ are then defined by $\hat\phi_i(\hat{\bm a}_j) = \delta_{ij}$ (the Kronecker delta) for $i, j = 1, \dots, N_k$.

Next we fix $\tilde T \in \tilde{\mathcal T}_h$, consider a parametric map $\bm F \in [\mathbb P_k(\hat T)]^N$ of the form $\bm F(\hat{\bm x}) = \sum_{i=1}^{N_k} \bm a_i \hat\phi_i(\hat{\bm x})$, and set $T := \bm F(\hat T)$.
We assume that $T$ is a perturbation of the linear element $\tilde T$ in the following sense:
\begin{enumerate}[label=(H\arabic*)]
	\setcounter{enumi}{2}
	\item \label{H4} $|\bm a_i - \tilde{\bm F}_{\tilde T}(\hat{\bm a}_i)| \le Ch_T^2$ for all $i = 1, \dots, N_k$ and $\tilde T \in \tilde{\mathcal T}_h$.
\end{enumerate}
If $h \le 1$ is small enough, which we assume throughout this paper, then such $\bm F$ becomes diffeomorphic on $\hat T$ (see \cite[Theorem 3]{CiaRav1972}).
Henceforth we write $\bm F_T$ and $\tilde{\bm F}_T$ to imply the $\bm F$ and $\tilde{\bm F}_{\tilde T}$ above, respectively.
From \ref{H4} one has $\operatorname{dist}(T, \tilde T) \le \|\bm F_T - \tilde{\bm F}_T\|_{L^\infty(\hat T)} \le Ch_T^2$, so that $\operatorname{diam} T \le Ch_T$.

Now we denote by $\mathcal T_h$ the set of $T$ constructed above, which is mutually disjoint and is called a partition into $\mathbb P_k$-isoparametric finite elements.
In addition, we define $\Omega_h$ to be the interior of the union of $\mathcal T_h$ (namely, $\overline\Omega_h = \bigcup_{T \in \mathcal T_h} T$).
Let us assume that $\{\mathcal T_h\}_{h \downarrow 0}$ is regular of order $k$ in the sense of \cite[Definition 3.2]{Ber1989}, that is,
\begin{enumerate}[label=(H\arabic*)]
	\setcounter{enumi}{3}
	\item \label{H6}
	$\|\nabla_{\hat{\bm x}}^m \bm F_T\|_{L^\infty(\hat T)} \le C\|B_T\|^m \le Ch_T^m$ for $T \in \mathcal T_h$ and $m = 1, \dots, k+1$,
\end{enumerate}
where $C$ is independent of $h_T$, and $\|B_T\|$ is the matrix $2$-norm of $B_T$.

\begin{remark} \label{rem: after regularity of order k}
	(i) We have $C_1 h_T^N \le \operatorname{meas}_N(T) = |\operatorname{det} (\nabla_{\hat{\bm x}} \bm F_T)| \le C_2 h_T^N$ for $T \in \mathcal T_h$.
	
	(ii) Under \ref{H6}, we find from \cite[Theorem 4]{CiaRav1972} or \cite[Theorem 4.3.3]{Cia78} that
	\begin{equation} \label{eq: high order derivative of Finv}
		\|\nabla_{\bm x}^m \bm F_T^{-1}\|_{L^\infty(T)} \le C h_T^{-1} \quad (m = 1, \dots, k+1).
	\end{equation}
\end{remark}

Let us next introduce a boundary mesh $\mathcal S_h$.
Setting $\Gamma_h := \partial\Omega_h$, we define
\begin{equation*}
	\mathcal S_h = \{ \bm F_T(\hat S) \subset \Gamma_h \mid T \in \mathcal T_h, \text{ $\hat S \subset \partial\hat T$ is a closed $(N-1)$-face of $\hat T$} \}.
\end{equation*}
Then we have $\Gamma_h = \bigcup_{S \in \mathcal S_h} S$ (disjoint union).
Each boundary element $S \in \mathcal S_h$ admits a unique $T \in \mathcal T_h$ such that $S \subset \partial T$, which is denoted by $T_S$.
We let $\bm b_r : U_r \to \mathbb R^{N-1}; {}^t(\bm y_r', \RED{y_{r,N}}) \mapsto \bm y_r'$ denote the projection to the base set.
We assume that $\Omega$ is approximated by $\Omega_h$ in the following sense.
\begin{enumerate}[label=(H\arabic*)]
	\setcounter{enumi}{4}
	\item \label{asmp: omit r}
	$\Gamma_h$ is covered by $\{U_r\}_{r=1}^M$, and each portion $\Gamma_h \cap U_r$ is represented as a graph $(\bm y_r', \varphi_{rh}(\bm y_r'))$, where $\varphi_{rh}$ is a continuous function defined in $\overline{\Delta_r}$.
	Moreover, each $S \in \mathcal S_h$ is contained in some $U_r$.
	
	\item \label{H8}
	The restriction of $\varphi_{rh}$ to $\bm b_r(S)$ for each $S \in \mathcal S_h$ is a polynomial function of degree $\le k$.
	Moreover, $\varphi_{rh}$ approximates $\varphi_r$ as accurately as a general $\mathbb P_k$-interpolation does; namely, we assume that
	\begin{align*}
		\|\varphi_r - \varphi_{rh}\|_{L^\infty(\bm b_r(S))} &\le Ch^{k+1} =: \delta, \\
		\|(\nabla')^m (\varphi_r - \varphi_{rh})\|_{L^\infty(\bm b_r(S))} &\le Ch^{k+1-m} \qquad (m = 1, \dots, k + 1).
	\end{align*}
\end{enumerate}
These assumptions imply that the local coordinate system for $\Omega$ is compatible with $\{\Omega_h\}_{h \downarrow 0}$ and that $\Gamma_h$ is a piecewise $\mathbb P_k$-accurate approximation of $\Gamma$.
If $h$ is small enough, \RED{$\bm\pi|_{\Gamma_h}$ is well-defined and bijective}, cf.\ \cite[Proposition 8.1]{KOZ16}.

% 2.3
\subsection{Estimates on a boundary-skin layer}
For a subset $\Gamma_h'$ of $\Gamma_h$, let $\bm\pi(\Gamma_h', \delta) := \{\bar{\bm x} + t \bm n(\bar{\bm x}) \mid \bar{\bm x} \in \bm\pi(\Gamma_h'), \; |t| \le \delta \}$ be a tubular neighborhood with the base $\Gamma_h'$.
We recall the following boundary-skin estimates for $S \in \mathcal S_h$, $1\le p\le \infty$, and $v \in W^{1,p}(\Omega \cup \Gamma(\delta))$:
\begin{gather}
	\|v\|_{L^p(\bm\pi(S, \delta))} \le C(\delta^{1/p} \|v\|_{L^p(S)} + \delta \|\nabla v\|_{L^p(\bm\pi(S, \delta))}), \label{eq2: boundary-skin estimates} \\
	\|v - v\circ\bm\pi\|_{L^p(S)} \le C\delta^{1-1/p} \|\nabla v\|_{L^p(\bm\pi(S, \delta))}. \label{eq3: boundary-skin estimates}
\end{gather}
The proofs are given in \cite[Theorems 8.1--8.3]{KOZ16} for $k = 1$, which can be extended to $k \ge 2$.
A version of \eref{eq2: boundary-skin estimates} limited to $\Omega_h \setminus \Omega$ also holds (see \cite[Lemma A.1]{KasKem2020a}):
\begin{equation} \label{eq: RHS with Omegah minus Omega}
	\|v\|_{L^p(\bm\pi(S, \delta) \cap (\Omega_h \setminus \Omega))} \le C(\delta^{1/p} \|v\|_{L^p(S)} + \delta \|\nabla v\|_{L^p(\bm\pi(S, \delta) \cap (\Omega_h \setminus \Omega))}).
\end{equation}

% 2.4
\subsection{Extension from $\Omega$ to $\Omega \cup \Gamma(\delta)$}
We let $\tilde\Omega := \Omega\cup\Gamma(\delta) = \Omega_h\cup\Gamma(\delta)$ with $\delta = Ch^{k+1}$ given above.
Recall that $\tilde u$ and $\tilde f$ are stable extensions of $u \in W^{k+1, \infty}(\Omega)$ and $f \in W^{k-1,\infty}(\Omega)$, respectively.

We also need extensions whose behavior in $\Gamma(\delta)\setminus\Omega$ can be controled by that in $\Gamma(c\delta) \cap \Omega$ for some constant $c>0$.
For this we introduce $P : W^{m,p}(\Omega) \to W^{m,p}(\tilde\Omega) \, (m \in \{0,1,2,3\}, p \in [1,\infty])$ by reflection with respect to $\Gamma$ as follows.
For $\bm x \in \Omega\setminus\Gamma(\delta)$ we let $Pf(\bm x) = f(\bm x)$; for $\bm x = \bar{\bm x} + t \bm n(\bar{\bm x}) \in \Gamma(\delta)$ we define
\begin{equation*}
	Pf(\bm x) =
	\begin{cases}
		f(\bm x) & (-\delta \le t<0), \\
		6f(\bar{\bm x} - tn(\bar{\bm x})) - 8f(\bar{\bm x} - 2t n(\bar{\bm x})) + 3f(\bar{\bm x} - 3t n(\bar{\bm x})) & (0\le t \le \delta).
	\end{cases}
\end{equation*}
The following stability result may be proved similarly to \cite[Proposition 2.1]{KasKem2020a}.
\begin{proposition} \label{prop: stability of extension}
	The extension operator $P$ satisfies
	\begin{align*}
		\|Pf\|_{W^{m, p}(\Gamma(\delta))} &\le C\|f\|_{W^{m, p}(\Omega \cap \Gamma(3\delta))} \quad m \in \{0,1,2,3\}, \; p\in[1,\infty],
	\end{align*}
	where the constant $C > 0$ is independent of $\delta$ and $f$.
\end{proposition}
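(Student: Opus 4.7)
The plan is to decompose $\Gamma(\delta) = (\Gamma(\delta) \cap \Omega) \cup (\Gamma(\delta) \setminus \Omega)$ and handle the two pieces separately. On the inner piece $Pf = f$, so $\|Pf\|_{W^{m,p}(\Gamma(\delta) \cap \Omega)} = \|f\|_{W^{m,p}(\Gamma(\delta) \cap \Omega)} \le \|f\|_{W^{m,p}(\Omega \cap \Gamma(3\delta))}$ follows at once from $\Gamma(\delta)\cap\Omega \subset \Omega\cap\Gamma(3\delta)$. The real content lies in controlling $Pf$ on the outer piece $\Gamma(\delta) \setminus \Omega$, where $Pf$ is given by the reflection formula.

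For this I would switch to the tubular coordinate $(\bar{\bm x}, t) \in \Gamma \times [-\delta_0, \delta_0]$ provided by the diffeomorphism $\bm x = \bar{\bm x} + t\bm n(\bar{\bm x})$, whose Jacobian is bounded from above and below uniformly since $\Gamma \in C^{k+1,1}$ with $k \ge 2$. For each $j \in \{1,2,3\}$ introduce the reflection $R_j : \Gamma(\delta) \setminus \Omega \to \Omega \cap \Gamma(j\delta)$ acting in these coordinates by $(\bar{\bm x}, t) \mapsto (\bar{\bm x}, -jt)$. Each $R_j$ is a smooth diffeomorphism whose Jacobian determinant and spatial derivatives up to order $3$ are bounded independently of $\delta$, again thanks to the smoothness of $\Gamma$, which gives the required regularity of $\bm\pi(\bm x)$ and $\bm n(\bar{\bm x})$ on $\Gamma(\delta_0)$.

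With these ingredients, the bound for $m = 0$ follows from $|Pf(\bm x)| \le 17 \max_j |f(R_j(\bm x))|$ and the change of variables $\bm y = R_j(\bm x)$. For $m \ge 1$ the chain rule yields a pointwise estimate $|\nabla^m(f \circ R_j)(\bm x)| \le C \sum_{\ell = 1}^m |\nabla^\ell f|(R_j(\bm x))$; raising to the $p$-th power, integrating over $\Gamma(\delta) \setminus \Omega$, and changing variables again gives $\|\nabla^m Pf\|_{L^p(\Gamma(\delta) \setminus \Omega)} \le C \|f\|_{W^{m,p}(\Omega \cap \Gamma(3\delta))}$, with the $p = \infty$ case handled pointwise.

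The main obstacle is verifying that the piecewise construction really produces a function in $W^{m,p}(\Gamma(\delta))$ for $m$ up to $3$, i.e.\ that no singular distribution is introduced on $\Gamma$ by the gluing. The coefficients $(6, -8, 3)$ are precisely chosen so that the identities $6 - 8 + 3 = 1$, $-6 + 16 - 9 = 1$, and $6 - 32 + 27 = 1$ hold, which imply that $Pf$, $\partial_n Pf$, and $\partial_n^2 Pf$ all match at $\Gamma$ from both sides. This $C^2$-continuity across $\Gamma$, combined with the tangential smoothness of $\Gamma$ and the piecewise $W^{3,p}$-regularity on each side, ensures that the weak derivatives of $Pf$ up to order $3$ agree almost everywhere with the classical derivatives computed separately on each side, so that the preceding pointwise bounds translate into the desired global Sobolev estimate.
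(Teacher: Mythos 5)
Your argument is correct and is essentially the standard reflection-extension proof that the paper itself only sketches by deferring to \cite[Proposition 2.1]{KasKem2020a}: split into the inner region where $Pf=f$ and the outer region handled by the three reflections, verify the moment conditions $\sum_j c_j(-j)^i=1$ ($i=0,1,2$) so that the traces of $Pf$, $\nabla Pf$, $\nabla^2 Pf$ match across $\Gamma$ (hence no singular part in the third weak derivative), and conclude by the chain rule and a change of variables with Jacobians bounded uniformly in $\delta$. Your explicit remark that the $C^{2,1}$-regularity of $\bm\pi$ and $\bm n$ (available since $k\ge 2$) is what makes the reflections $W^{3,\infty}$-diffeomorphisms is exactly the point that makes the $m=3$ case work.
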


% 3
\section{Finite element approximation} \label{sec3}
We introduce the global nodes of $\mathcal T_h$ by
\begin{equation*}
	\mathcal N_h = \{ \bm F_T(\hat{\bm a}_i) \in \overline\Omega_h \mid T \in \mathcal T_h, \; i = 1, \dots, N_k \}.
\end{equation*}
The interior and boundary nodes are denoted by $\mathring{\mathcal N}_h = \mathcal N_h \cap \operatorname{int}\Omega_h$ and $\mathcal N_h^\partial = \mathcal N_h \cap \Gamma_h$, respectively.
We next define the global nodal basis functions $\phi_{\bm a} \, (\bm a \in \mathcal N_h)$ by
\begin{equation*}
	\phi_{\bm a}|_T = \begin{cases}
		0 & \text{ if } \bm a \notin T, \\
		\hat\phi_i \circ \bm F_T^{-1} & \text{ if $\bm a \in T$ and  $\bm a = \bm F_T(\hat{\bm a_i})$ with $\hat{\bm a_i} \in \hat\Sigma_k$},
	\end{cases}
	\quad (\forall T \in \mathcal T_h)
\end{equation*}
which become continuous in $\overline\Omega_h$ thanks to the assumption on $\hat\Sigma_k$.
Then $\phi_{\bm a}(\bm b) = 1$ if $\bm a = \bm b$ and $\phi_{\bm a}(\bm b) = 0$ otherwise, for $\bm a, \bm b \in \mathcal N_h$.
We now set the $\mathbb P_k$-isoparametric finite element space $V_h$ by
\begin{equation*}
	V_h = \operatorname{span}\{\phi_{\bm a}\}_{\bm a \in \mathcal N_h} = \{ v_h \in C(\overline\Omega_h) \mid \hat v_h := v_h\circ \bm F_T \in \mathbb P_k(\hat T) \; (\forall T \in \mathcal T_h) \},
\end{equation*}
and its subspace $\mathring V_h \subset H^1_0(\Omega_h)$ with the zero boundary value constraint by
\begin{equation*}
	\mathring V_h = \operatorname{span}\{\phi_{\bm a}\}_{\bm a \in \mathring{\mathcal N}_h} = \{ v_h \in V_h \mid v_h = 0 \;\text{ on }\; \Gamma_h \}.
\end{equation*}

By the chain rule $\nabla_{\bm x} = (\nabla_{\bm x}\bm F_T^{-1}) \nabla_{\hat{\bm x}}$ and by \eref{eq: high order derivative of Finv}, we have
\begin{equation*}
	\|\nabla_{\bm x}^m v\|_{L^p(T)} \le Ch_T^{N/p} \sum_{l = 1}^m h_T^{-l} \|\nabla_{\hat{\bm x}}^l (v \circ \bm F_T)\|_{L^p(\hat T)} \quad (v \in W^{m,p}(T))
\end{equation*}
for $T \in \mathcal T_h, m \ge 1$, and $p \in [1, \infty]$.
In particular, if $v = v_h \in V_h$ and $m \ge k+1$ (thus $\nabla_{\hat{\bm x}}^{k+1} \hat v_h \equiv 0$ in $\hat T$) it follows from $\nabla_{\hat{\bm x}} = (\nabla_{\hat{\bm x}} \bm F_T) \nabla_{\bm x}$ and \ref{H6} that (cf.\ \cite[p.\ 187]{Wah78})
\begin{equation} \label{eq: superapx}
\begin{aligned}
	\|\nabla_{\bm x}^m v_h\|_{L^p(T)} &\le Ch_T^{N/p} \sum_{l = 1}^k h_T^{-l} \|\nabla_{\hat{\bm x}}^l \hat v_h\|_{L^p(\hat T)} \\
		&\le Ch_T^{N/p} \sum_{l = 1}^k h_T^{-l} \cdot h_T^{-N/p} h_T^l \|v_h\|_{W^{k, p}(T)} \le C \|v_h\|_{W^{k, p}(T)}.
\end{aligned}
\end{equation}

Next, for $v \in C(\overline\Omega_h)$ the Lagrange interpolation $I_h v \in V_h$ is defined by $I_h v = \sum_{\bm a \in \mathcal N_h} v(\bm a) \phi_{\bm a}$.
Accordingly, we define the interpolation to $\mathring V_h$ by $\mathring I_h v = \sum_{\bm a \in \mathring{\mathcal N}_h} v(\bm a) \phi_{\bm a}$, neglecting  values at the boundary nodes $\mathcal N_h^\partial$.
The difference between $I_h$ and $\mathring I_h$ is estimated in the next lemma (see the appendix for the proof).
\begin{lemma} \label{lem: Ih - mathring Ih}
	Assume that $T \in \mathcal T_h$ and $S \in \mathcal S_h$ satisfy $\emptyset \neq T \cap \Gamma_h \subset S$. Then we have
	\begin{equation*}
		\|\nabla^m(v_h - \mathring I_h v_h)\|_{L^p(T)} \le Ch_T^{1/p-m} \|v_h\|_{L^p(S)} \quad \forall v_h \in V_h,
	\end{equation*}
	where $m = 0, 1, \dots$ and $1\le p\le \infty$.
\end{lemma}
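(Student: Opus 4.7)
The plan is to express $v_h - \mathring I_h v_h$ as a sum of boundary-node contributions only, pull back to the reference element where norm equivalence on finite-dimensional spaces is available, and then apply the standard isoparametric scaling.

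First I would note that since $v_h = I_h v_h$, the restriction to $T$ satisfies
\begin{equation*}
    (v_h - \mathring I_h v_h)|_T = \sum_{\bm a \in \mathcal N_h^\partial \cap T} v_h(\bm a)\,\phi_{\bm a},
\end{equation*}
and the hypothesis $\emptyset\neq T\cap\Gamma_h\subset S$ forces every boundary node $\bm a \in \mathcal N_h^\partial\cap T$ to lie on $S$. Next I would pass to the reference element via $\bm F_T$; let $\hat S$ be the face of $\hat T$ with $\bm F_T(\hat S)=S$. Writing $\hat w := (v_h - \mathring I_h v_h)\circ \bm F_T$, we get
\begin{equation*}
    \hat w = \sum_{\hat{\bm a}_i\in\hat S} \hat v_h(\hat{\bm a}_i)\,\hat\phi_i.
\end{equation*}
Because $\hat w$ lies in a fixed finite-dimensional subspace of $\mathbb P_k(\hat T)$ spanned by reference basis functions associated with $\hat S$, norm equivalence gives $\|\hat w\|_{W^{m,p}(\hat T)} \le C\max_{\hat{\bm a}_i\in\hat S} |\hat v_h(\hat{\bm a}_i)|$ for every $m\ge 0$ and $1\le p\le \infty$. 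Since $\hat v_h|_{\hat S}\in \mathbb P_k(\hat S)$ and the nodes on $\hat S$ form a unisolvent set for $\mathbb P_k(\hat S)$, a second application of norm equivalence on this space yields $\max_{\hat{\bm a}_i\in\hat S}|\hat v_h(\hat{\bm a}_i)|\le C\|\hat v_h\|_{L^p(\hat S)}$.

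Finally I would transfer back to $T$ and $S$. For the volume pullback, \ref{H6} and \eqref{eq: high order derivative of Finv} together with the chain rule (exactly the argument of \eqref{eq: superapx}, noting $\nabla_{\hat{\bm x}}^l\hat w\equiv 0$ for $l\ge k+1$) give
\begin{equation*}
    \|\nabla_{\bm x}^m(v_h-\mathring I_h v_h)\|_{L^p(T)} \le C h_T^{N/p}\sum_{l=1}^{\min(m,k)} h_T^{-l}\|\nabla_{\hat{\bm x}}^l\hat w\|_{L^p(\hat T)} \le C h_T^{N/p-m}\|\hat w\|_{W^{m,p}(\hat T)},
\end{equation*}
where any missing positive powers of $h_T$ (when $m>k$) are absorbed into the right-hand side since $h_T\le 1$. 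For the surface pullback, the surface Jacobian of $\bm F_T|_{\hat S}$ is comparable to $h_T^{N-1}$ by shape regularity and \ref{H6}, so $\|\hat v_h\|_{L^p(\hat S)} \le C h_T^{-(N-1)/p}\|v_h\|_{L^p(S)}$. Assembling the three estimates gives the claimed factor $h_T^{N/p-m}\cdot h_T^{-(N-1)/p} = h_T^{1/p-m}$.

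The only genuinely delicate point is the chain-rule scaling when $m\ge k+1$: a naive bound would miss a power of $h_T$. This is handled exactly as in \eqref{eq: superapx} by exploiting that derivatives of $\hat w$ beyond order $k$ vanish, so that at most $k$ factors of $\nabla\bm F_T^{-1}$ ever appear from the $\hat w$ side, while the remaining compositions with higher derivatives of $\bm F_T^{-1}$ are absorbed via \eqref{eq: high order derivative of Finv}. I expect this Faà di Bruno bookkeeping to be the principal obstacle, but it is exactly the calculation already performed by the author in deriving \eqref{eq: superapx}, so no new ideas are required.
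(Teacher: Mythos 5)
Your overall strategy coincides with the paper's: write $v_h-\mathring I_hv_h$ on $T$ as the sum of boundary-node contributions $\sum_{\bm a\in\mathcal N_h^\partial\cap T}v_h(\bm a)\phi_{\bm a}$, control the nodal values by $\|v_h\|_{L^p(S)}$ via norm equivalence on the reference face, and balance the volume scaling $h_T^{N/p}$ against the surface scaling $h_T^{-(N-1)/p}$. The only structural difference is that the paper first reduces to $m=0$ by the inverse inequality and then works directly on $T$ (integrating $\sum_{\bm a}|\phi_{\bm a}|^p$ over $T$), whereas you keep general $m$ and scale derivatives through $\bm F_T$; both routes are legitimate, and your worry about the Fa\`a di Bruno bookkeeping for $m\ge k+1$ is correctly resolved by the truncation of the sum at $l=k$ exactly as in \eqref{eq: superapx}.

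There is, however, one genuine inaccuracy: you take ``$\hat S$ to be the face of $\hat T$ with $\bm F_T(\hat S)=S$,'' which tacitly assumes that $S$ is a face of $T$ itself. The hypothesis only asserts $\emptyset\neq T\cap\Gamma_h\subset S$, which permits $T$ to meet $\Gamma_h$ merely along an edge or a vertex of $S$; in that case $S$ is a face of a \emph{different} element $T'=T_S$, and no face of $\hat T$ is mapped onto $S$ by $\bm F_T$. The repair is exactly what the paper does: the boundary nodes of $T$ still lie on $S$, so $\max_{\bm a\in\mathcal N_h^\partial\cap T}|v_h(\bm a)|\le\|v_h\|_{L^\infty(S)}$ trivially, and the passage from $\|v_h\|_{L^\infty(S)}$ to $h_{T'}^{-(N-1)/p}\|v_h\|_{L^p(S)}$ is carried out by pulling back through $\bm F_{T'}$ and using norm equivalence on $\mathbb P_k(\hat S)$ with $\hat S=\bm F_{T'}^{-1}(S)$; the final exponent is unchanged because $h_T\approx h_{T'}$ by shape regularity. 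With that correction (and noting that your displayed chain-rule sum should include the $l=0$ change-of-variables term when $m=0$), the argument is complete.
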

We recall from \cite[Theorem 5]{CiaRav1972} standard interpolation error estimates for $I_h$:
\begin{lemma} \label{lem: I - Ih}
	Let $l, m \in \mathbb N$ satisfy $0 \le l \le m \le k + 1$ and $p \in [1, \infty]$.
	Assume the embedding $W^{m,p} \hookrightarrow \RED{C^0}$ holds for subsets in $\mathbb R^N$.
	Then we have
	\begin{equation*}
		\|v - I_h v\|_{W^{l,p}(T)} \le Ch^{m-l} \|v\|_{W^{m,p}(T)} \quad (T \in \mathcal T_h, \, v \in W^{m, p}(T)).
	\end{equation*}
\end{lemma}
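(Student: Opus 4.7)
The plan is a pullback--reference--pushforward argument. Setting $\hat v := v \circ \bm F_T$, the construction of the isoparametric basis in \sref{sec3} gives $(I_h v)\circ\bm F_T = \hat I \hat v$, where $\hat I$ denotes the ordinary Lagrange $\mathbb P_k$-interpolation on $\hat T$ associated with the nodes $\hat\Sigma_k$. Since $m-1 \le k$, the operator $\hat I$ preserves $\mathbb P_{m-1}$, so the standard Bramble--Hilbert lemma (well-posed on $W^{m,p}(\hat T)$ thanks to the assumed embedding into $C^0(\hat T)$) yields on the fixed reference simplex
$$\|\hat v - \hat I \hat v\|_{W^{l,p}(\hat T)} \le C\,|\hat v|_{W^{m,p}(\hat T)}.$$

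To transfer norms between $T$ and $\hat T$ I would invoke the Fa\`a di Bruno chain rule together with \ref{H6} and \eref{eq: high order derivative of Finv}. Expanding $\nabla_{\hat{\bm x}}^m(v\circ\bm F_T)$ as a sum over partitions, each summand is a product of a pulled-back derivative $(\nabla_{\bm x}^j v)\circ\bm F_T$ with $1\le j\le m$ and a product $\prod_i\nabla_{\hat{\bm x}}^{\alpha_i}\bm F_T$ whose orders $\alpha_i\ge 1$ total $m$; by \ref{H6} the latter is bounded in $L^\infty(\hat T)$ by $C\prod_i h_T^{\alpha_i} = Ch_T^m$. Combining with the volume factor $|\det\nabla_{\hat{\bm x}}\bm F_T|\sim h_T^N$ from \rref{rem: after regularity of order k}(i) yields
$$|\hat v|_{W^{m,p}(\hat T)} \le Ch_T^{m-N/p}\|v\|_{W^{m,p}(T)}.$$
A symmetric computation using $\|\nabla_{\bm x}^j \bm F_T^{-1}\|_{L^\infty(T)}\le Ch_T^{-1}$ for $1\le j\le k+1$ from \eref{eq: high order derivative of Finv}, applied to $w := v - I_h v$ and $\hat w := \hat v - \hat I\hat v$, yields
$$\|w\|_{W^{l,p}(T)} \le Ch_T^{N/p-l}\|\hat w\|_{W^{l,p}(\hat T)}.$$
Chaining the three inequalities and using $h_T \le h$ delivers the claimed $Ch^{m-l}$ factor.

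The main technical obstacle will be the Fa\`a di Bruno bookkeeping: for an isoparametric (non-affine) $\bm F_T$, the composition produces mixed derivative terms of $v$ of every order between $1$ and $m$, each decorated with products of derivatives of $\bm F_T$ of various orders, and one must verify that every such term contributes the same power $h_T^{m-N/p}$. This sharp count relies on the full force of \ref{H6}, namely $\|\nabla_{\hat{\bm x}}^l \bm F_T\|_{L^\infty}\le Ch_T^l$ for \emph{every} $1\le l\le k+1$ (not only $l=1$); any weaker control on higher derivatives would propagate to a loss of a power of $h_T$ and break the optimal exponent $m-l$. In other words, regularity of order $k$ in the sense of \cite{Ber1989} is exactly what makes the classical affine interpolation estimate survive in the curved isoparametric setting.
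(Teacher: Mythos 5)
The paper gives no proof of this lemma, simply citing \cite[Theorem 5]{CiaRav1972}, and your argument is precisely the standard proof of that cited result: pull back to $\hat T$, apply Bramble--Hilbert using $\mathbb P_{m-1}$-invariance of $\hat I$, and transfer norms via Fa\`a di Bruno together with \ref{H6} and \eref{eq: high order derivative of Finv}. The bookkeeping you describe is correct (in particular, the lower-order derivatives of $v$ generated by the non-affine composition are exactly why the full norm $\|v\|_{W^{m,p}(T)}$, rather than the seminorm, appears on the right), so this is a valid reconstruction consistent with the paper's intended route.
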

Estimates for $\mathring I_h$ are, however, more involved because of domain perturbation ($u = 0$ on $\Gamma$ does not necessarily imply $\tilde u = 0$ on $\Gamma_h$).
We state it in the following form, whose proof is similar to that of Proposition \ref{prop: I - mathring Ih local} below (we only have to consider global $\Omega_h$ and set $v_2 = 0$ there) and thus omitted here.
\begin{proposition} \label{prop: I - mathring Ih global}
	Under the assumptions of Lemma \ref{lem: I - Ih}, let $m \ge 2$ and $v \in W^{m, p}(\tilde\Omega)$ satisfy $v = 0$ on $\Gamma$.
	Then we have
	\begin{align*}
		\Big( \sum_{T \in \mathcal T_h} \|\nabla^l(v - \mathring I_h v)\|_{L^p(T)}^p \Big)^{1/p} &\le C h^{m - l} \|v\|_{W^{m,p}(\Omega_h)} + C h^{k+1-l} \|\nabla^2 v\|_{L^p(\Gamma(\delta))},
	\end{align*}
	with the obvious modification for $p = \infty$.
\end{proposition}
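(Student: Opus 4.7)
The plan is to write $v - \mathring I_h v = (v - I_h v) + (I_h v - \mathring I_h v)$ and estimate the two contributions separately. The first piece is handled directly by Lemma \ref{lem: I - Ih}, which bounds it by $Ch^{m-l}\|v\|_{W^{m,p}(T)}$ on each element; summing $p$-th powers over $T \in \mathcal T_h$ produces exactly the first term $Ch^{m-l}\|v\|_{W^{m,p}(\Omega_h)}$ of the claimed estimate.

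For the second piece, observe that $I_h v$ and $\mathring I_h v$ agree at every interior node, so $I_h v - \mathring I_h v = \sum_{\bm a \in \mathcal N_h^\partial} v(\bm a)\phi_{\bm a}$ is supported on boundary elements $T$, each of which (for smooth $\Omega$ and small $h$) satisfies $\emptyset \neq T \cap \Gamma_h \subset S$ with a unique $S \in \mathcal S_h$. Applying Lemma \ref{lem: Ih - mathring Ih} with $v_h = I_h v$ and noting that $\mathring I_h (I_h v) = \mathring I_h v$ yields
\begin{equation*}
	\|\nabla^l(I_h v - \mathring I_h v)\|_{L^p(T)} \le C h_T^{1/p - l}\|I_h v\|_{L^p(S)}.
\end{equation*}
I would then split $\|I_h v\|_{L^p(S)} \le \|I_h v - v\|_{L^p(S)} + \|v\|_{L^p(S)}$. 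The first summand is controlled by a trace inequality from $T$ to $S$ combined with Lemma \ref{lem: I - Ih}, producing a contribution of order $Ch^{m-1/p}\|v\|_{W^{m,p}(T)}$. The second summand is where the Dirichlet condition enters crucially: since $v \circ \bm\pi \equiv 0$ on $S$, \eqref{eq3: boundary-skin estimates} gives $\|v\|_{L^p(S)} \le C\delta^{1-1/p}\|\nabla v\|_{L^p(\bm\pi(S,\delta))}$. To convert the $\nabla v$ here into the $\nabla^2 v$ appearing in the target, I would apply \eqref{eq2: boundary-skin estimates} to $\nabla v$ (permissible since $m\ge 2$) together with a standard trace inequality on $T_S$, obtaining
\begin{equation*}
	\|\nabla v\|_{L^p(\bm\pi(S,\delta))}^p \le C \delta h^{-1}\|\nabla v\|_{L^p(T_S)}^p + C\delta h^{p-1}\|\nabla^2 v\|_{L^p(T_S)}^p + C\delta^p\|\nabla^2 v\|_{L^p(\bm\pi(S,\delta))}^p.
\end{equation*}

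Assembling everything, summing $p$-th powers over boundary $S$ with the bounded overlap of $\{T_S\}$ and $\{\bm\pi(S,\delta)\}$, and substituting $\delta = Ch^{k+1}$, the three remaining boundary contributions scale as $h^{p(k+1-l)}\|\nabla v\|^p_{L^p(\Omega_h)}$, $h^{p(k+2-l)}\|\nabla^2 v\|^p_{L^p(\Omega_h)}$, and $h^{p(2k+2-l)-k}\|\nabla^2 v\|^p_{L^p(\Gamma(\delta))}$. Using $2\le m \le k+1$ and $h\le 1$, the first two are absorbed into $Ch^{p(m-l)}\|v\|^p_{W^{m,p}(\Omega_h)}$ (since $k+1\ge m$ and $k+2\ge m$), and the third fits inside $Ch^{p(k+1-l)}\|\nabla^2 v\|^p_{L^p(\Gamma(\delta))}$ because $p(k+1)\ge k$. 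Taking $p$-th roots then yields the desired estimate. The main obstacle is precisely this bookkeeping: the boundary-skin estimate \eqref{eq3: boundary-skin estimates} alone produces a $\|\nabla v\|_{L^p(\Gamma(\delta))}$ term with an effective prefactor $h^{(k+1-l)-k/p}$ that is by itself too weak, and it is only after trading one further power of $\delta$ for $\|\nabla^2 v\|$ via \eqref{eq2: boundary-skin estimates}, plus exporting the residual surface norm into the bulk by trace, that every remaining $h$-exponent lands at or below the targets $p(m-l)$ and $p(k+1-l)$.
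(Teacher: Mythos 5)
Your proposal is correct and follows essentially the same route as the paper's argument: the paper proves this proposition as the special case $v_2 = 0$ of Proposition \ref{prop: I - mathring Ih local}, whose proof uses exactly your decomposition $(v - I_h v) + (I_h v - \mathring I_h v)$, Lemma \ref{lem: Ih - mathring Ih} applied to $I_h v$, the splitting of $\|I_h v\|_{L^p(S)}$, and the chain \eqref{eq3: boundary-skin estimates} followed by \eqref{eq2: boundary-skin estimates} and a trace inequality to trade $\nabla v$ for $\nabla^2 v$ near the boundary. Your exponent bookkeeping (including the absorption of the $h^{p(k+1-l)}\|\nabla v\|^p$ and $h^{p(k+2-l)}\|\nabla^2 v\|^p$ terms into the bulk term via $m \le k+1$) checks out; the only cosmetic imprecision is that the trace inequalities should be taken on the element $T'$ having $S$ as a face rather than on $T$ itself, which you effectively acknowledge by invoking $T_S$.
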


% 4
\section{Reduction to $W^{1,1}$-analysis of a regularized Green function} \label{sec4}
Fixing arbitrary $K \in \mathcal T_h$ and $z \in K$, we try to bound the pointwise error $\tilde u(\bm z) - u_h(\bm z)$.
We construct a regularized delta function; the proof is given in the appendix.
\begin{proposition} \label{prop: regularized delta}
	For $K \in \mathcal T_h$ and $\bm z \in K$, there exists $\eta = \eta_{K, \bm z} \in C^\infty_0(K)$ such that $\operatorname{dist}(\operatorname{supp}\eta, \partial K) \ge Ch_K$, $\|\nabla^m \eta\|_{L^\infty(K)} \le Ch_K^{-N-m} \, (m=0, 1)$, and
	\begin{equation*}
		(v_h, \eta)_K = v_h(\bm z) \quad \text{for $v_h = \hat v_h \circ \bm F_K^{-1}$ with arbitrary $\hat v_h \in \mathbb P_k(\hat T)$},
	\end{equation*}
	where the constant $C$ is independent of $K$, $\bm z$, and $h_K$.
\end{proposition}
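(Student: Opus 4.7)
The plan is to do the construction on the reference simplex $\hat T$ and pull back via the isoparametric map $\bm F_K$. Setting $\hat{\bm z} := \bm F_K^{-1}(\bm z) \in \hat T$ and $J_K := \det(\nabla_{\hat{\bm x}} \bm F_K)$, if I can produce a smooth reference function $\hat\eta \in C^\infty_0(\hat T)$ whose support lies in a fixed compact subset of $\hat T$ (independent of $\hat{\bm z}$, $K$, and $h$) and which reproduces polynomials in the sense that
\begin{equation*}
\int_{\hat T} \hat v(\hat{\bm y})\, \hat\eta(\hat{\bm y})\, d\hat{\bm y} = \hat v(\hat{\bm z}) \qquad \forall\, \hat v \in \mathbb P_k(\hat T),
\end{equation*}
then $\eta(\bm x) := \hat\eta(\bm F_K^{-1}(\bm x))/J_K(\bm F_K^{-1}(\bm x))$ will do: the change-of-variables formula immediately converts this reference identity into $(v_h, \eta)_K = v_h(\bm z)$ for every $v_h = \hat v_h \circ \bm F_K^{-1}$ with $\hat v_h \in \mathbb P_k(\hat T)$.

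To construct $\hat\eta$, I would fix once and for all a nonnegative weight $\hat\omega \in C^\infty_0(\hat T)$ that is strictly positive on an open ball compactly contained in $\hat T$, and orthonormalize the finite-dimensional space $\mathbb P_k(\hat T)$ with respect to the weighted inner product $\langle \hat p, \hat q\rangle := \int_{\hat T} \hat p\, \hat q\, \hat\omega\, d\hat{\bm x}$; positivity of $\hat\omega$ on an open set makes this a genuine inner product on the polynomial space. Writing $\{\hat p_j\}$ for the resulting orthonormal basis, the kernel $\hat G(\hat{\bm x}, \hat{\bm y}) := \sum_j \hat p_j(\hat{\bm x}) \hat p_j(\hat{\bm y})$ satisfies the usual reproducing identity, and so $\hat\eta(\hat{\bm y}) := \hat G(\hat{\bm z}, \hat{\bm y})\, \hat\omega(\hat{\bm y})$ is smooth, compactly supported in $\operatorname{supp}\hat\omega$, and polynomial of degree at most $k$ in $\hat{\bm z}$, which guarantees that all its $\hat{\bm y}$-derivatives are bounded uniformly in $\hat{\bm z} \in \hat T$.

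It remains to transfer these reference-side bounds to $K$. Using \ref{H6} together with Remark \ref{rem: after regularity of order k}(i), the Jacobian $J_K$ and each of its $\hat{\bm x}$-derivatives has size $O(h_K^N)$, whence $|\nabla_{\hat{\bm x}}^m (1/J_K)| \le C h_K^{-N}$ for $m = 0, 1$; combining with \eqref{eq: high order derivative of Finv} and the chain rule delivers $\|\nabla_{\bm x}^m \eta\|_{L^\infty(K)} \le C h_K^{-N-m}$ for $m = 0, 1$. For the support-distance property, $\operatorname{supp} \eta = \bm F_K(\operatorname{supp}\hat\omega)$ and $\partial K = \bm F_K(\partial \hat T)$, and since \eqref{eq: high order derivative of Finv} makes $\bm F_K^{-1}$ Lipschitz with constant at most $Ch_K^{-1}$, the Euclidean distance between these two images is bounded below by a positive constant multiple of $h_K\, \operatorname{dist}(\operatorname{supp}\hat\omega, \partial\hat T) \ge C h_K$.

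The step I expect to demand most care is the bookkeeping of powers of $h_K$ in this last paragraph: one must exploit that by \ref{H6} every derivative of $\bm F_K$ of order $\ge 1$ scales precisely like the corresponding power of $h_K$, so that the factor $h_K^N$ inside $J_K$ survives differentiation and no extraneous negative powers of $h_K$ creep into the bound on $\nabla \eta$. The curvature of $K$ itself causes no additional difficulty, because all geometric information about $K$ is encoded in $\bm F_K$ and already quantified by \ref{H4}, \ref{H6}, and \eqref{eq: high order derivative of Finv}.
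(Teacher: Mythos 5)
Your proposal is correct and follows essentially the same route as the paper: a fixed smooth weight $\hat\omega$ on the reference simplex, a dual-basis/reproducing-kernel construction for $\mathbb P_k(\hat T)$ with respect to the weighted inner product, and the pull-back via $\bm F_K$ with the scaling of $J_K$ and the Lipschitz bound on $\bm F_K^{-1}$ supplying the $h_K^{-N-m}$ bounds and the support-distance property. The only cosmetic difference is that you divide out the Jacobian explicitly after working with a $K$-independent reference Gram matrix, whereas the paper absorbs $J_K$ into the mass matrix $M$ on the physical element; both yield the same estimates.
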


Next we introduce a ``dyadic decomposition'' of $\Omega_h$.
We set a sequence of scales:
\begin{equation*}
	d_0 = L h, \quad d_j = 2^j d_0 \quad \text{for } \; j = 1, \dots, J := \left\lceil \frac{\log( \operatorname{diam}\Omega_h/d_0 )}{\log2} \right\rceil,
\end{equation*}
where $L$ means the ratio of the ``initial stride'' $d_0$ to the ``minimum scale'' $h$.
As we see later, $L$ will be taken sufficiently large (but independently of $h$).
Then we define a subset $\Omega_{h, j}$ of $\Omega_h$---which has the scale $d_j$ in terms of the distance from $K$---by
\begin{align*}
	\Omega_{h0} &= \bigcup\{T \in \mathcal T_h \mid d(T, K) \le d_0\}, \\
	\Omega_{h, j} &= \bigcup\{T \in \mathcal T_h \mid d_{j-1} < d(T, K) \le d_j\} \; (j = 1, \dots, J),
\end{align*}
where $d(T, T') = \min\{ |\bm x - \bm x'| \mid \bm x \in T, \bm x' \in T' \}$ denotes a distance function between two elements $T, T' \in \mathcal T_h$.
They are compatible with a standard ball $B(\bm z; r) = \{\bm x \mid |\bm x - \bm z| \le r\}$ and annulus $A(\bm z; r, R) = \{\bm x \mid r \le |\bm x - \bm z|\le R\}$.
In fact, by triangle inequalities, combined with $d_J \ge \operatorname{diam}\Omega_h$ and $\operatorname{diam} T \le Ch$ for $T \in \mathcal T_h$, we obtain
\begin{align*}
	\Omega_h &= \bigcup_{j=0}^J \Omega_{h, j} \text{ (disjoint union)}, \quad \Omega_{h0} \subset \Omega_h \cap B(\bm z; 2d_0), \\
	\Omega_{h, j} &\subset \Omega_h \cap A_j^{(s)} \subset \Omega_{h,j-1} \cup \Omega_{h, j} \cup \Omega_{h, j+1} =: \Omega_{h, j}' \, (j \ge 1),
\end{align*}
where $A_j^{(s)} := A(\bm z; (1-\frac{s}{2}) d_{j-1}, (1+s) d_j)$ \RED{for all $s \in (0, 1)$}, provided that $L$ is sufficiently large.
We also remark that $\sum_{j=\ell_1}^{\ell_2} d_j^\alpha$ is bounded by $C d_{\ell_1}^\alpha$ if $\alpha < 0$, by $C |\log d_0|$ if $\alpha = 0$, and by $C d_{\ell_2}^\alpha$ if $\alpha > 0$, for $0 \le \ell_1 \le \ell_2 \le J$.

Now let us start the first part of the proof of Theorem \ref{main thm}.
For any $v_h \in \mathring V_h$ we use the regularized delta function $\eta$ constructed in Proposition \ref{prop: regularized delta} to get
\begin{equation*}
	(\tilde u - u_h)(\bm z) = (\tilde u - v_h)(\bm z) + (v_h - \tilde u, \eta)_{\Omega_h} + (\tilde u - u_h, \eta)_{\Omega_h}.
\end{equation*}
The first two terms on the right-hand side are bounded by $C\|\tilde u - v_h\|_{L^\infty(K)}$.
To address the last term we define a regularized Green function $g \in W^{3,\infty}(\Omega)$ by solving
\begin{equation*}
	-\Delta g = \eta \quad\text{in }\; \Omega, \qquad g = 0 \quad\text{on }\; \Gamma,
\end{equation*}
where $\eta$ is extended by zero outside $\operatorname{supp} \eta \subset \Omega$ (this inclusion holds if $h$ is small).
We also utilize its finite element approximation $g_h \in \mathring V_h$ obtained by solving
\begin{equation*}
	a_h(v_h, g_h) = (\nabla v_h, \nabla g_h)_{\Omega_h} = (v_h, \eta)_{\Omega_h} \quad \forall v_h \in \mathring V_h.
\end{equation*}

Then it follows from $\operatorname{supp}\eta \subset \Omega_h \cap \Omega$ and $u_h = v_h = g_h = 0$ on $\Gamma_h$ that
\begin{align*}
	&(\tilde u - u_h, \eta)_{\Omega_h} = (u - u_h, -\Delta g)_{\Omega_h \cap \Omega} = (\tilde u - u_h, -\Delta \tilde g)_{\Omega_h} - (\tilde u - u_h, -\Delta \tilde g)_{\Omega_h \setminus \Omega} \\
	= \; &a_h(\tilde u - u_h, \tilde g) - (\tilde u - u_h, \partial_{\bm n_h} \tilde g)_{\Gamma_h} + (\tilde u - u_h, \Delta \tilde g)_{\Omega_h \setminus \Omega} \\
	= \; &a_h(\tilde u - u_h, \tilde g - g_h) + (\nabla(\tilde u - u_h), \nabla g_h)_{\Omega_h} - (\tilde u, \partial_{\bm n_h} \tilde g)_{\Gamma_h} + (\tilde u - u_h, \Delta \tilde g)_{\Omega_h \setminus \Omega} \\
	= \; &a_h(\tilde u - v_h, \tilde g - g_h) + (-\Delta \tilde u - \tilde f, g_h)_{\Omega_h \setminus \Omega} - (\tilde u, \partial_{\bm n_h} \tilde g)_{\Gamma_h} + (\tilde u - v_h, \Delta \tilde g)_{\Omega_h \setminus \Omega},
\end{align*}
where $\tilde g := Pg$, $\partial_{\bm n_h}$ denotes the derivative in the direction of the outer unit normal $\bm n_h$ to $\Gamma_h$, and we have used $a_h(v_h - u_h, \tilde g - g_h) = (u_h - v_h, \Delta \tilde g)_{\Omega_h \setminus \Omega}$ in the last line.

We estimate each term on the right-hand side above \RED{as follows:}
%using \eqref{eq2: boundary-skin estimates}--\eqref{eq: RHS with Omegah minus Omega}, as follows:
\begin{gather}
	|a_h(\tilde u - v_h, \tilde g - g_h)| \le \|\nabla(\tilde u - v_h)\|_{L^\infty(\Omega_h)} \|\nabla(\tilde g - g_h)\|_{L^1(\Omega_h)}, \notag \\
	\begin{aligned}
		|(-\Delta \tilde u - \tilde f, g_h)_{\Omega_h \setminus \Omega}| &\le C\|u\|_{W^{2,\infty}(\Omega)} \delta \|\nabla g_h\|_{L^1(\Omega_h \setminus \Omega)} \\
			&\le C \delta \|u\|_{W^{2,\infty}(\Omega)} (\|\nabla(\tilde g - g_h)\|_{L^1(\Omega_h \setminus \Omega)} + \|\nabla \tilde g\|_{L^1(\Gamma(\delta))}) \\
			&\le C \delta \|u\|_{W^{2,\infty}(\Omega)} (\|\nabla(\tilde g - g_h)\|_{L^1(\Omega_h)} + \delta),
	\end{aligned} \notag \\
	\begin{aligned}
		(\tilde u, \partial_{\bm n_h} \tilde g)_{\Gamma_h} &\le \|\tilde u - u \circ \bm\pi\|_{L^\infty(\Gamma_h)} \|\nabla \tilde g\|_{L^1(\Gamma_h)}
			\le C \delta \|\nabla\tilde u\|_{L^\infty(\Gamma(\delta))},
	\end{aligned}  \notag \\
	(\tilde u - v_h, \Delta \tilde g)_{\Omega_h \setminus \Omega} \le \|\tilde u - v_h\|_{L^\infty(\Omega_h)} \|\nabla^2 \tilde g\|_{L^1(\Gamma(\delta))} \le Ch^{-1} \delta \|\tilde u - v_h\|_{L^\infty(\Omega_h)}, \notag
\end{gather}
where we have exploited \eqref{eq2: boundary-skin estimates}--\eqref{eq: RHS with Omegah minus Omega} as well as the estimates
\begin{equation} \label{eq: thanks to DBC}
	\|\nabla \tilde g\|_{L^1(\Gamma(\delta))} \le C \delta, \quad \|\nabla \tilde g\|_{L^1(\Gamma_h)} \le C, \quad \|\nabla^2 \tilde g\|_{L^1(\Gamma(\delta))} \le C h^{-1} \delta.
\end{equation}

\begin{remark}
	The above results may be proved in the same way as \cite[Corollary B.1]{KasKem2020a}, which follows from the representation formula $g(\bm x) = \int_{\Omega \cap K} G(\bm x, \bm y) \eta(\bm y) \, d\bm y$ essentially, $G(\bm x, \bm y)$ denoting the Green function for the Laplace operator.
	We emphasize in particular that, for the present situation of the Dirichlet boundary condition, $G$ admits an auxiliary derivative estimates, which enables us to obtain the first two inequalities of \eqref{eq: thanks to DBC} (see \cite[Remark B.2]{KasKem2020a}).
\end{remark}

As a consequence, noting that $\delta \le h$ we have
\begin{align*}
	|\tilde u(\bm z) - u_h(\bm z)| &\le C \|\tilde u - v_h\|_{L^\infty(\Omega_h)} + \|\nabla(\tilde u - v_h)\|_{L^\infty(\Omega_h)} \|\nabla(\tilde g - g_h)\|_{L^1(\Omega_h)} \\
		&\qquad  + C \delta \|u\|_{W^{2,\infty}(\Omega)} (\|\nabla(\tilde g - g_h)\|_{L^1(\Omega_h)} + 1).
\end{align*}
Therefore, Theorem \ref{main thm} follows from $v_h = \mathring I_h \tilde u$ and interpolation estimates of Proposition \ref{prop: I - mathring Ih global}, provided that $\|\nabla(\tilde g - g_h)\|_{L^1(\Omega_h)} \le Ch$.
Because $\|\cdot\|_{L^1(\Omega_h)} = \sum_{j=0}^J \|\cdot\|_{L^1(\Omega_{h, j})}$ by the dyadic decomposition and $\|\cdot\|_{L^1(\Omega_{h, j})} \le C d_j^{N/2} \|\cdot\|_{L^2(\Omega_{h, j})}$, it suffices to show the weighted $H^1$-estimate
\begin{equation} \label{eq: weighted H1}
	\sum_{j=0}^J d_j^{N/2} \|\nabla(\tilde g - g_h)\|_{L^2(\Omega_{h, j})} \le Ch,
\end{equation}
which will be the goal of the next section.

% 5
\section{Weighted $H^1$-estimate} \label{sec5}
To prove \eqref{eq: weighted H1}, we first fix $j \in \{0, \dots, J\}$ and estimate $\|\nabla(\tilde g - g_h)\|_{L^2(\Omega_{h, j})}$.
Let $\omega \in C^\infty_0(\mathbb R^N)$ be a non-negative cut off function such that
\begin{equation*}
	\omega \equiv 1 \;\text{ in }\; A_j^{(1/3)}, \quad \operatorname{supp} \omega \subset A_j^{(2/3)}, \quad \|\nabla^m \omega\|_{L^\infty(\mathbb R^N)} \le C d_j^{-m} \; (m \ge 0).
\end{equation*}
From the construction of the dyadic decomposition, we see that $\omega$ is identically 1 and 0 in $\Omega_{h, j}$ and $\Omega \setminus \Omega_{h, j}'$, respectively.
Then it follows that
\begin{equation} \label{eq: start of local H1}
\begin{aligned}
	\|\nabla(\tilde g - g_h)\|_{L^2(\Omega_{h, j})}^2 &\le \int_{\Omega_h} \omega |\nabla(\tilde g - g_h)|^2 \, d\bm x \\
		&= a_h(\omega(\tilde g - g_h), \tilde g - g_h) - (\nabla\omega (\tilde g - g_h), \nabla(\tilde g - g_h))_{\Omega_h}.
\end{aligned}
\end{equation}
The second term \RED{of \eqref{eq: start of local H1}} is bounded by $Cd_j^{-1} \|\tilde g - g_h\|_{L^2(\Omega_{h, j}')} \|\nabla(\tilde g - g_h)\|_{L^2(\Omega_{h, j}')}$.

To deal with the first term on the right-hand side of \eqref{eq: start of local H1}, we set
\begin{equation*}
	e := \omega(\tilde g - g_h) = \omega(\tilde g - \mathring I_h \tilde g) + \omega(\mathring I_h \tilde g - g_h) =: e_1 + e_2.
\end{equation*}
Observe that, since $e_2 \in H^1_0(\Omega_h)$ and $a_h(\cdot, \tilde g - g_h) = (\cdot, -\Delta \tilde g)_{\Omega_h \setminus \Omega}$ in $\mathring V_h$,
\begin{equation} \label{eq: e1, e2, and other term}
	a_h(e, \tilde g - g_h) = a_h(e_1 - \mathring I_h e_1, \tilde g - g_h) + a_h(e_2 - I_h e_2, \tilde g - g_h) + (\mathring I_h e, - \Delta \tilde g)_{\Omega_h \setminus \Omega},
\end{equation}
and that
\begin{equation*}
	|a_h(e_1 - \mathring I_h e_1, \tilde g - g_h)| \le \|\nabla(e_1 - \mathring I_h e_1)\|_{L^2(\Omega_{h, j}')} \|\nabla(\tilde g - g_h)\|_{L^2(\Omega_{h, j}')}.
\end{equation*}
Here we need the following localized interpolation error estimate for $\mathring I_h$, which is announced before Proposition \ref{prop: I - mathring Ih global} and will be proved in the appendix.
\begin{proposition} \label{prop: I - mathring Ih local}
	Let $l, m \in \mathbb N$ satisfy $0 \le l \le m \le k + 1$, $m \ge 2$ and assume the embedding $W^{m,p} \hookrightarrow \RED{C^0}$ holds for subsets in $\mathbb R^N$.
	Moreover, assume that $v = v_1 + v_2$, where $v_1 \in W^{m,p}(\tilde\Omega)$ satisfies $v_1 = 0$ on $\Gamma$ and $v_2 \in C(\overline\Omega_h)$ satisfies $v_2|_T \in W^{m,p}(T)$ for $T \in \mathcal T_h$ with $v_2 = 0$ on $\Gamma_h$.
	Then, for $j =0, \dots, J$ we have
	\begin{equation} \label{eq: I - mathring Ih local}
		\Big( \sum_{T \subset \Omega_{h, j}} \|\nabla^l(v - \mathring I_h v)\|_{L^p(T)}^p \Big)^{1/p} \le C h^{m - l} \Big( \sum_{T \subset \Omega_{h, j}} \|v\|_{W^{m,p}(T)}^p \Big)^{1/p} + C h^{k+1-l} \|v_1\|_{W^{2,p}(\Omega_{h, j} \cup \bm\pi(\Gamma_{h, j}, \delta))},
	\end{equation}
	where $\Gamma_{h, j} := \Gamma_h \cap \partial\Omega_{h, j}$ and we make the obvious modification for $p = \infty$.
\end{proposition}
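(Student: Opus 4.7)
The plan is to decompose $v - \mathring I_h v = (v - I_h v) + (I_h v - \mathring I_h v)$ and estimate each summand on $\Omega_{h,j}$ separately. The first summand, via elementwise application of \lref{lem: I - Ih} on every $T \subset \Omega_{h,j}$ followed by summation of $p$-th powers, immediately delivers the contribution $Ch^{m-l}\bigl(\sum_{T\subset\Omega_{h,j}}\|v\|_{W^{m,p}(T)}^p\bigr)^{1/p}$ on the right-hand side of \eref{eq: I - mathring Ih local}.

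For the second summand, the crucial observation is that $\mathring I_h$ differs from $I_h$ only at boundary nodes, and the hypothesis $v_2|_{\Gamma_h} = 0$ forces $v_2(\bm a) = 0$ for every $\bm a \in \mathcal N_h^\partial$. Hence $I_h v_2 = \mathring I_h v_2$, so $I_h v - \mathring I_h v = I_h v_1 - \mathring I_h v_1$, and this is supported on those boundary-touching elements $T \subset \Omega_{h,j}$ whose unique boundary face $S \in \mathcal S_h$ lies inside $\Gamma_{h,j}$. Applying \lref{lem: Ih - mathring Ih} to $I_h v_1 \in V_h$ on each such $T$ reduces the task to the face-type bound
\begin{equation*}
  \|I_h v_1\|_{L^p(S)} \le C\,\delta\, h^{-1/p}\,\|v_1\|_{W^{2,p}(T_S \cup \bm\pi(S,\delta))},
\end{equation*}
because then $h_T^{1/p-l}\cdot\delta h^{-1/p} = h^{-l}\delta = h^{k+1-l}$, and the bounded overlap of $\{T_S \cup \bm\pi(S,\delta)\}_{S \subset \Gamma_{h,j}}$ lets me assemble the $\ell^p$-sum over boundary-touching $T$ into the $W^{2,p}$-norm on $\Omega_{h,j} \cup \bm\pi(\Gamma_{h,j},\delta)$.

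The face bound exploits the hypothesis $v_1|_\Gamma = 0$. For a boundary node $\bm a = \bar{\bm a} + s_{\bm a}\bm n(\bar{\bm a})$ with $\bar{\bm a} = \bm\pi(\bm a) \in \Gamma$ and $|s_{\bm a}| \le \delta$, the second-order Taylor expansion along $\bm n$ reads
\begin{equation*}
  v_1(\bm a) = s_{\bm a}\,\partial_{\bm n} v_1(\bar{\bm a}) + \int_0^{s_{\bm a}} (s_{\bm a}-\tau)\,\partial_{\bm n}^2 v_1(\bar{\bm a}+\tau\bm n)\,d\tau.
\end{equation*}
Using this expansion at each node of $S$ together with the equivalence of $\|\cdot\|_{L^p(S)}$ with a weighted sum of nodal values on a $\mathbb P_k$-polynomial space, the leading part of $I_h v_1|_S$ contributes $\delta$ times the $L^p(S)$-norm of the nodal interpolation of $\partial_{\bm n} v_1 \circ \bm\pi$, which is controlled by a standard trace inequality for $\partial_{\bm n} v_1$ on $\bm\pi(S) \subset \Gamma$ combined with local interpolation stability; this produces the desired factor $h^{-1/p}\|v_1\|_{W^{2,p}(T_S \cup \bm\pi(S,\delta))}$. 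The remainder is of order $\delta^2\nabla^2 v_1$ pointwise and, via a H\"older-in-$\tau$ estimate on the tube $\bm\pi(S,\delta)$, contributes only strictly higher-order terms.

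The principal obstacle, and the reason the statement carries $\|v_1\|_{W^{2,p}}$ rather than $\|\nabla v_1\|_{L^p}$, is the $L^p$-scaling of the face bound. A naive application of \eref{eq3: boundary-skin estimates} to $v_1$ on $S$ (using $v_1 \circ \bm\pi = 0$) would give only $\|v_1\|_{L^p(S)} \le C\delta^{1-1/p}\|\nabla v_1\|_{L^p(\bm\pi(S,\delta))}$, which combined with \lref{lem: Ih - mathring Ih} degrades the final bound to the wrong power $h^{k+1-l-k/p}$ and thus fails for $p<\infty$. Extracting the full factor $\delta = O(h^{k+1})$ in $L^p$ is precisely what forces the second-order Taylor expansion and the trace step above; the remaining localization to $\Omega_{h,j}$ via bounded overlap of the neighborhoods $\{T_S\}$ and the tubes $\{\bm\pi(S,\delta)\}$ is then routine, and otherwise the argument parallels that of \pref{prop: I - mathring Ih global}.
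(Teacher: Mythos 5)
Your overall architecture coincides with the paper's: split off the elements not meeting $\Gamma_h$, write $v-\mathring I_h v=(v-I_hv)+(I_hv-\mathring I_hv)$ on the boundary-touching ones, invoke Lemma \ref{lem: Ih - mathring Ih} to reduce to an $L^p(S)$-bound on the face, and extract a full factor $\delta$ there from the vanishing of $v$ on $\Gamma$ resp.\ $\Gamma_h$. The gap is in how you obtain the face bound. Your second-order Taylor expansion at each boundary node, with an integral remainder involving $\partial_{\bm n}^2v_1$ along a normal segment, is not justified for $v_1\in W^{2,p}$ with finite $p$: in the regime actually used ($p=2$, $N=3$, $v_1$ built from $\tilde g\in H^2$ locally) one only has $\nabla v_1\in W^{1,p}$, which embeds into neither $C^0$ nor any space admitting traces on one-dimensional segments, so neither $\partial_{\bm n}v_1(\bar{\bm a})$ nor the line integral of $\partial_{\bm n}^2v_1$ at a prescribed node is controlled. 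Equivalently, the pointwise bound $|v_1(\bm a)|\le C\delta$ at nodes, which would give your face estimate directly, is an $L^\infty$-type statement and does not convert into the needed $\ell^p$-over-nodes bound.

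Moreover, your diagnosis of the obstacle is mistaken: the ``naive'' application of \eqref{eq3: boundary-skin estimates} does not fail, it is simply not the last step. The paper splits $\|I_hv\|_{L^p(S_T)}\le\|I_hv-v\|_{L^p(S_T)}+\|v\|_{L^p(S_T)}$ (the first term going into the $h^{m-l}$-contribution via a trace inequality on $T'\supset S_T$), writes $\|v\|_{L^p(S_T)}=\|v_1-v_1\circ\bm\pi\|_{L^p(S_T)}\le C\delta^{1-1/p}\|\nabla v_1\|_{L^p(\bm\pi(S_T,\delta))}$ by \eqref{eq3: boundary-skin estimates}, and then applies \eqref{eq2: boundary-skin estimates} to $\nabla v_1$ on the thin tube, which restores the missing $\delta^{1/p}$:
\begin{equation*}
\delta^{1-1/p}\|\nabla v_1\|_{L^p(\bm\pi(S_T,\delta))}\le C\delta\|\nabla v_1\|_{L^p(S_T)}+C\delta^{2-1/p}\|\nabla^2 v_1\|_{L^p(\bm\pi(S_T,\delta))}.
\end{equation*}
A trace inequality then converts $\|\nabla v_1\|_{L^p(S_T)}$ into $h_T^{-1/p}\|v_1\|_{W^{2,p}(T')}$, giving exactly the factor $\delta h_T^{-1/p}$ you were after, entirely with integral norms and no pointwise evaluation of derivatives. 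Your treatment of the interior elements, the observation $I_hv_2=\mathring I_hv_2$, and the bounded-overlap assembly over $\Omega_{h,j}$ are all fine; only the nodal Taylor step needs to be replaced by this integrated boundary-skin argument.
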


Setting $\Gamma_{h, j}' := \Gamma_h \cap \partial\Omega_{h, j}'$, we apply the above proposition twice ($v_1 = \omega \tilde g$ and $v_2 = \omega \mathring I_h \tilde g$ for the first time, and $v_1 = \tilde g$ and $v_2 = 0$ for the second time) to obtain
\begin{equation*}
\begin{aligned}
	\|\nabla(e_1 - \mathring I_h e_1)\|_{L^2(\Omega_{h, j}')} &\le Ch \Big( \sum_{T \subset \Omega_{h, j}'} \|\omega(\tilde g - \mathring I_h \RED{\tilde g})\|_{H^2(T)}^2 \Big)^{1/2} + Ch^k \|\omega \tilde g\|_{H^2(\Omega_{h, j}' \cup \bm\pi(\Gamma_{h, j}', \delta))} \\
		&\le Ch^2 \|\tilde g\|_{H^3(\Omega_{h, j}')} + Ch^k \|\tilde g\|_{H^2(\Omega_{h, j}' \cup \bm\pi(\Gamma_{h, j}', \delta))} + Ch^k \|\omega \tilde g\|_{H^2(\Omega_{h, j}' \cup \bm\pi(\Gamma_{h, j}', \delta))},
\end{aligned}
\end{equation*}
where the terms involving derivatives of $\omega$ are addressed by $hd_j^{-1} \le 1$.
If $j \le 1$, by an elliptic regularity estimate we have
\begin{equation*}
	\|\tilde g\|_{H^3(\Omega_{h, j}')} \le C \|g\|_{H^3(\Omega)} \le C \|\eta\|_{H^1(K)} \le C h^{N/2} h^{-N-1} = C L^{N/2+1} d_0^{-N/2-1}.
\end{equation*}
On the other hand, if $j \ge 2$ then we make use of derivative estimates for the Green function to deduce (cf.\ \cite[Lemma B.2]{KasKem2020a}, which shows $\|\tilde g\|_{H^2(\Omega_{h, j}')} \le C d_j^{-N/2}$)
\begin{equation*}
	\|\tilde g\|_{H^3(\Omega_{h, j}')} \le C d_j^{N/2} \|\tilde g\|_{W^{3,\infty}(\Omega_{h, j}')} \le Cd_j^{N/2} d_j^{-N-1} = C d_j^{-N/2-1}.
\end{equation*}
A similar argument bounds $\|\omega \tilde g\|_{H^2(\Omega_{h, j}' \cup \bm\pi(\Gamma_{h, j}', \delta))}$ (and also $\|\tilde g\|_{H^2(\Omega_{h, j}' \cup \bm\pi(\Gamma_{h, j}', \delta))}$) by
\begin{align*}
	&C d_j^{N/2} \Big( d_j^{-2} \|\tilde g\|_{L^\infty(\Omega_{h, j}' \setminus \Omega_{h, j} \cup \bm\pi(\Gamma_{h, j}', \delta))} + d_j^{-1} \|\nabla \tilde g\|_{L^\infty(\Omega_{h, j}' \setminus \Omega_{h, j} \cup \bm\pi(\Gamma_{h, j}', \delta))} \Big)
			+ C \|\tilde g\|_{H^2(\Omega_{h, j}' \cup \bm\pi(\Gamma_{h, j}', \delta))} \\
		\le \; &C d_j^{-N/2} |\log d_j| +
		\begin{cases}
			C L^{N/2} d_0^{-N/2} & (j \le 1), \\
			C d_j^{-N/2} & (j \ge 2).
		\end{cases}
\end{align*}
In view of $k \ge 2$, $d_j \le C$, and $L \ge 1$, we conclude that
\begin{equation} \label{eq: e1 - Ih e1}
	\|\nabla(e_1 - \mathring I_h e_1)\|_{L^2(\Omega_{h, j}')} \le C _j h^2 d_j^{-N/2-1},
\end{equation}
where $C_j = CL^{N/2+1}$ for $j \le 1$ and $C_j = C$ for $j \ge 2$.
Therefore, the first term of \eqref{eq: e1, e2, and other term} is estimated as
\begin{equation} \label{eq1: bound of (11)}
	|a_h(e_1 - I_h e_1, \tilde g - g_h)| \le C _j d_j^{-N/2-1} \|\nabla(\tilde g - g_h)\|_{L^2(\Omega_{h, j}')}.
\end{equation}

Next, to estimate the second term of \eqref{eq: e1, e2, and other term} we find from \lref{lem: I - Ih} that
\begin{align*}
	\|\nabla(e_2 - I_h e_2)\|_{L^2(\Omega_{h, j}')} &\le Ch^k \sum_{m=0}^{k+1} \Big( \sum_{T \subset \Omega_{h, j}'} \|\omega(\mathring I_h \tilde g - g_h)\|_{H^m(T)}^2 \Big)^{1/2} \\
		&\le Ch^k \sum_{m=0}^{k+1} d_j^{-m} \Big( \sum_{T \subset \Omega_{h, j}'} \|\mathring I_h \tilde g - g_h\|_{H^{k+1-m}(T)}^2 \Big)^{1/2}.
\end{align*}
The term for $m = 0$ may be omitted by \eqref{eq: superapx}, hence this is bounded by
\begin{align}
	&Ch^k \sum_{m=1}^{k+1} d_j^{-m} \Big( \sum_{T \subset \Omega_{h, j}'} \|\mathring I_h \tilde g - g_h\|_{H^{k+1-m}(T)}^2 \Big)^{1/2} \notag \\
	\le \; & Chd_j^{-1} \|\nabla(\mathring I_h \tilde g - g_h)\|_{L^2(\Omega_{h, j}')} + C d_j^{-1} \|\mathring I_h \tilde g - g_h\|_{L^2(\Omega_{h, j}')} \notag \\
	\le \; & Chd_j^{-1} \|\nabla(\tilde g - g_h)\|_{L^2(\Omega_{h, j}')} + C d_j^{-1} \|\tilde g - g_h\|_{L^2(\Omega_{h, j}')} + C_j h^2 d_j^{-N/2 - 1}, \label{eq: e2 - Ih e2}
\end{align}
where we have used $hd_j \le 1$, inverse inequalities $h^{l} \|\nabla^{l+1} v_h\|_{L^2(T)} \le C\|\nabla v_h\|_{L^2(T)}$, and an interpolation estimate for $\|\nabla(\tilde g - \mathring I_h \RED{\tilde g})\|_{L^2(\Omega_{h, j}')}$ based on \pref{prop: I - mathring Ih local} (which also appeared in dealing with $e_1$).
Consequently, the second term $a_h(e_2 - I_h e_2, \tilde g - g_h)$ is bounded by
\begin{equation} \label{eq2: bound of (11)}
\begin{aligned}
	&\|\nabla(e_2 - I_h e_2)\|_{L^2(\Omega_{h, j}')} \|\nabla(\tilde g - g_h)\|_{L^2(\Omega_{h, j}')} \\
	\le \; & Chd_j^{-1} \|\nabla(\tilde g - g_h)\|_{L^2(\Omega_{h, j}')}^2 + C d_j^{-1} \|\tilde g - g_h\|_{L^2(\Omega_{h, j}')} \|\nabla(\tilde g - g_h)\|_{L^2(\Omega_{h, j}')} \\
	&\qquad + C_j h^2 d_j^{-N/2 - 1} \|\nabla(\tilde g - g_h)\|_{L^2(\Omega_{h, j}')}.
\end{aligned}
\end{equation}

Finally, we apply \eqref{eq: RHS with Omegah minus Omega} (note that $\mathring I_h e = 0$ on $\Gamma_h$) to estimate the last term of \eqref{eq: e1, e2, and other term} as
\begin{align*}
	|(\mathring I_h e, - \Delta \tilde g)_{\Omega_h \setminus \Omega}| &\le C \delta \|\nabla (\mathring I_h e)\|_{L^2(\Omega_h \setminus \Omega)} \|\nabla^2 \tilde g\|_{L^2(\bm\pi(\Gamma_{h, j}', \delta))} \\
		&\le C \delta (\|\nabla e\|_{L^2(\Omega_h)} + \|\nabla(e - \mathring I_h e)\|_{L^2(\Omega_h)}) \cdot (\delta d_j^{N-1})^{1/2} d_j^{-N},
\end{align*}
where we refer to \cite[Lemma B.3]{KasKem2020a} for the bound of $\|\nabla^2 \tilde g\|_{L^2(\bm\pi(\Gamma_{h, j}', \delta))}$. Since
\begin{align*}
	\|\nabla e\|_{L^2(\Omega_h)} &\le \|\nabla(\tilde g - g_h)\|_{L^2(\Omega_{h, j}')} + C d_j^{-1} \|\tilde g - g_h\|_{L^2(\Omega_{h, j}')}, \notag \\
	\|\nabla(e - \mathring I_h e)\|_{L^2(\Omega_h)} &\le C(hd_j^{-1} \|\nabla(\tilde g - g_h)\|_{L^2(\Omega_{h, j}')} + d_j^{-1} \|\tilde g - g_h\|_{L^2(\Omega_{h, j}')}) + C_j h^2 d_j^{-N/2 - 1}
\end{align*}
(recall \eqref{eq: e1 - Ih e1}, \eqref{eq: e2 - Ih e2}, and $\mathring I_h e_2 = I_h e_2$), we see that the last term $(\mathring I_h e, - \Delta \tilde g)_{\Omega_h \setminus \Omega}$ is bounded by
\begin{equation} \label{eq3: bound of (11)}
	C \delta^{3/2} d_j^{-(N+1)/2} (\|\nabla(\tilde g - g_h)\|_{L^2(\Omega_{h, j}')} + d_j^{-1} \|\tilde g - g_h\|_{L^2(\Omega_{h, j}')} + C_j h^2 d_j^{-N/2 - 1}).
\end{equation}
%Furthermore, by a similar computation to what we did above,
%\begin{align*}
%	\|\nabla(e - \mathring I_h e)\|_{L^2(\Omega_h)} &\le C(hd_j^{-1} \|\nabla(\tilde g - g_h)\|_{L^2(\Omega_{h, j}')} + d_j^{-1} \|\tilde g - g_h\|_{L^2(\Omega_{h, j}')}) \\
%		&\hspace{1cm} + C_j h^2 d_j^{-N/2 - 1}.
%\end{align*}

Now, combining the estimates \RED{\eqref{eq1: bound of (11)}, \eqref{eq2: bound of (11)}, and \eqref{eq3: bound of (11)}} with \eqref{eq: start of local H1}, we deduce that
\begin{align*}
	\|\nabla(\tilde g - g_h)\|_{L^2(\Omega_{h, j})}^2
		&\le Ch d_j^{-1} \|\nabla(\tilde g - g_h)\|_{L^2(\Omega_{h, j}')}^2 + C d_j^{-1} \|\tilde g - g_h\|_{L^2(\Omega_{h, j}')}\|\nabla(\tilde g - g_h)\|_{L^2(\Omega_{h, j}')} \\
		&\qquad + C_j h^2 d_j^{-N/2 - 1} \|\nabla(\tilde g - g_h)\|_{L^2(\Omega_{h, j}')} \\
		&\qquad + C \delta^{3/2} d_j^{-(N+1)/2} (\|\nabla(\tilde g - g_h)\|_{L^2(\Omega_{h, j}')} + d_j^{-1} \|\tilde g - g_h\|_{L^2(\Omega_{h, j}')} + C_j h^2 d_j^{-N/2 - 1}).
\end{align*}
Taking the square root and multiplying by $d_j^{N/2}$ yield (note that $hd_j^{-1} \le L^{-1}$)
\begin{align}
	d_j^{N/2} \|\nabla(\tilde g - g_h)\|_{L^2(\Omega_{h, j})}
	&\le C L^{-1/2} d_j^{N/2} \|\nabla(\tilde g - g_h)\|_{L^2(\Omega_{h, j}')} + C_j^{1/2} (h^2d_j^{-1})^{1/2} (d_j^{N/2} \|\nabla(\tilde g - g_h)\|_{L^2(\Omega_{h, j}')})^{1/2} \notag \\
		&\qquad + C \Big( d_j^{N/2-1} \|\tilde g - g_h\|_{L^2(\Omega_{h, j}')} \Big)^{1/2} \Big( d_j^{N/2} \|\nabla(\tilde g - g_h)\|_{L^2(\Omega_{h, j}')} \Big)^{1/2} \notag \\
		&\qquad + C \delta^{3/4}d_j^{-1/4} \Big[ \Big( d_j^{N/2} \|\nabla(\tilde g - g_h)\|_{L^2(\Omega_{h, j}')} \Big)^{1/2} + \Big( d_j^{N/2-1} \|\tilde g - g_h\|_{L^2(\Omega_{h, j}')} \Big)^{1/2} \Big] \notag \\
		&\qquad + C_j h \delta^{3/4}d_j^{-3/4}. \label{eq: k=2 is essential}
\end{align}
By summation for $j = 0, \dots, J$ and $\sum_{j=0}^J d_j^{N/2} \|\cdot\|_{L^2(\Omega_{h, j}')} \le C \sum_{j=0}^J d_j^{N/2} \|\cdot\|_{L^2(\Omega_{h, j})}$, combined with the use of Cauchy--Schwarz inequalities, we have
\begin{align*}
	\sum_{j=0}^J d_j^{N/2} \|\nabla(\tilde g - g_h)\|_{L^2(\Omega_{h, j})}
	&\le \Big( C' L^{-1/2} + \frac14 \Big) \sum_{j=0}^J d_j^{N/2} \|\nabla(\tilde g - g_h)\|_{L^2(\Omega_{h, j})} + C L^{N/2+1} h^2d_0^{-1} \\
		&\qquad + C \sum_{j=0}^J d_j^{N/2-1} \|\tilde g - g_h\|_{L^2(\Omega_{h, j})} + C \delta^{3/2}d_0^{-1/2} + CL^{N/2+1} h \delta^{3/4}d_0^{-3/4},
\end{align*}
which concludes, if $L$ is chosen large enough to satisfy $C' L^{-1/2} \le 1/4$,
\begin{equation*}
	\sum_{j=0}^J d_j^{N/2} \|\nabla(\tilde g - g_h)\|_{L^2(\Omega_{h, j})} \le CL^{N/2+1} h + C \sum_{j=0}^J d_j^{N/2-1} \|\tilde g - g_h\|_{L^2(\Omega_{h, j})}.
\end{equation*}
Therefore, \eqref{eq: weighted H1} is proved, once a weighted $L^2$-error estimate of the form
\begin{equation} \label{eq: weighted L2}
	\sum_{j=0}^J d_j^{N/2-1} \|\tilde g - g_h\|_{L^2(\Omega_{h, j})} \le C L^{-1} \sum_{j=0}^J d_j^{N/2} \|\nabla(\tilde g - g_h)\|_{L^2(\Omega_{h, j})} + Ch
\end{equation}
is established ($L$ is chosen sufficiently large again).

\begin{remark}
	The assumption $k \ge 2$, that is, we restrict our consideration to quadratic or higher-order elements, is essential because for $k = 1$ (linear element) we would have only a factor $h$ instead of $h^2 d_j^{-1}$ in \eqref{eq: k=2 is essential}, which results in an $O(h|\log h|)$-term after the summation over $j$.
\end{remark}

% 6
\section{Weighted $L^2$-estimate} \label{sec6}
Let us prove \eqref{eq: weighted L2} above.
First we fix $j = 0, \dots, J$ and estimate $\|\tilde g - g_h\|_{L^2(\Omega_{h, j})}$.
To this end, for arbitrary $\psi \in C^\infty_0(\Omega_{h, j})$ such that $\|\psi\|_{L^2(\Omega_{h, j})} = 1$ we define $w \in H^2(\Omega)$ to be the solution of a dual problem ($\psi$ is extended by zero outside $\Omega_{h, j}$)
\begin{equation*}
	-\Delta w = \psi \quad\text{in }\; \Omega, \qquad w = 0 \quad\text{on }\; \Gamma.
\end{equation*}

Setting $\tilde w := Pw$, we observe (recall $a_h(\cdot, \tilde g - g_h) = (\cdot, -\Delta \tilde g)_{\Omega_h \setminus \Omega}$ in $\mathring V_h$):
\begin{equation} \label{eq: start of local L2}
\begin{aligned}
	(\psi, \tilde g - g_h)_{\Omega_h} &= (-\Delta w, \tilde g - g_h)_{\Omega_h \cap \Omega} + (\psi, \tilde g - g_h)_{\Omega_h \setminus \Omega} \\
		&= (-\Delta w, \tilde g - g_h)_{\Omega_h} + (\Delta \tilde w + \psi, \tilde g - g_h)_{\Omega_h \setminus \Omega} \\
		&= a_h(\tilde w, \tilde g - g_h) - (\partial_{\bm n_h} \tilde w, \tilde g - g_h)_{\Gamma_h} + (\Delta \tilde w + \psi, \tilde g - g_h)_{\Omega_h \setminus \Omega} \\
		&= a_h(\tilde w - \mathring I_h \tilde w, \tilde g - g_h) - (\mathring I_h \tilde w, \Delta \tilde g)_{\Omega_h \setminus \Omega}
			- (\partial_{\bm n_h} \tilde w, \tilde g)_{\Gamma_h} + (\Delta \tilde w + \psi, \tilde g - g_h)_{\Omega_h \setminus \Omega}.
\end{aligned}
\end{equation}

The first term on the right-hand side of \eqref{eq: start of local L2} is bounded, using Proposition \ref{prop: I - mathring Ih local}, by
\begin{align*}
	&\sum_{\ell = 0}^J \|\nabla(\tilde w - \mathring I_h \tilde w)\|_{L^2(\Omega_{h, \ell})} \|\nabla(\tilde g - g_h)\|_{L^2(\Omega_{h, \ell})}
		\le Ch \sum_{\ell = 0}^J \|\tilde w\|_{H^2(\Omega_{h, \ell} \cup \bm\pi(\Gamma_{h, \ell}, \delta))} \|\nabla(\tilde g - g_h)\|_{L^2(\Omega_{h, \ell})}.
\end{align*}
If $\ell \le j - 2$, since $\operatorname{dist}(\Omega_{h, \ell}, \Omega_{h, j}) \ge \frac14 d_j$ and $d_j \ge d_\ell$ we obtain
\begin{align*}
	\|\tilde w\|_{H^2(\Omega_{h, \ell} \cup \bm\pi(\Gamma_{h, \ell}, \delta))} \le C d_\ell^{N/2} \cdot d_j^{N/2} d_j^{-N} = C d_\ell^{N/2} d_j^{-N/2},
\end{align*}
where we have used the representation formula $w(\bm x) = \int_{\Omega \cap \Omega_{h, j}} G(\bm x, \bm y) \psi(\bm y) \, d\bm y$ as well as derivative estimates for $G(\bm x, \bm y)$, cf.\ \cite[Lemma B.4]{KasKem2020a}.
Similarly, if $\ell \ge j + 2$, this quantity is bounded by $C d_\ell^{-N/2} d_j^{N/2} \le C d_\ell^{N/2} d_j^{-N/2}$ using the facts $\operatorname{dist}(\Omega_{h, \ell}, \Omega_{h, j}) \ge \frac12 d_\ell$ and $d_\ell \ge d_j$.
For the remaining case $j-1 \le \ell \le j+1$, it follows from the global elliptic $H^2$-regularity estimate that
\begin{align*}
	\|\tilde w\|_{H^2(\Omega_{h, \ell} \cup \bm\pi(\Gamma_{h, \ell}, \delta))} \le C \|w\|_{H^2(\Omega)} \le C\|\psi\|_{L^2(\Omega)} \le C.
\end{align*}
Consequently,
\begin{equation} \label{eq1: bound of (psi, g - gh)}
	|a_h(\tilde w - \mathring I_h \tilde w, \tilde g - g_h)| \le C hd_j^{-N/2} \sum_{\ell = 0}^J d_\ell^{N/2} \|\nabla(\tilde g - g_h)\|_{L^2(\Omega_{h, \ell})}.
\end{equation}

For the second term of \eqref{eq: start of local L2}, we see that
\begin{align*}
	|(\mathring I_h \tilde w, \Delta \tilde g)_{\Omega_h \setminus \Omega}| &\le \sum_{\ell=0}^J \|\mathring I_h \tilde w\|_{L^2(\Omega_{h, \ell} \setminus \Omega)} \|\nabla^2 \tilde g\|_{L^2(\Omega_{h, \ell} \setminus \Omega)},
\end{align*}
where $\|\nabla^2 \tilde g\|_{L^2(\Omega_{h, \ell} \setminus \Omega)}$ can be bounded by $C \delta^{1/2} d_\ell^{-(N+1)/2}$, cf.\ \cite[Lemma B.3]{KasKem2020a}.
Moreover, by \eqref{eq: RHS with Omegah minus Omega} (note that $\mathring I_h \tilde w = 0$ on $\Gamma_h$), \eqref{eq2: boundary-skin estimates}, and \pref{prop: I - mathring Ih local} (with $v_2 = 0$),
\begin{align*}
	\|\mathring I_h \tilde w\|_{L^2(\Omega_{h, \ell} \setminus \Omega)} &\le C \delta \|\nabla (\mathring I_h \tilde w)\|_{L^2(\Omega_{h, \ell} \setminus \Omega)} \\
		&\le C \delta (\|\nabla\tilde w\|_{L^2(\Omega_{h, \ell} \setminus \Omega)} + \|\nabla (\mathring I_h \tilde w - \tilde w)\|_{L^2(\Omega_{h, \ell})}) \\
		&\le C \delta^{3/2} \|\nabla\tilde w\|_{L^2(\Gamma_{h, \ell})} + C \delta h\|\tilde w\|_{H^2(\Omega_{h, \ell} \cup \bm\pi(\Gamma_{h, \ell}, \delta))} \\
		&\le \begin{cases}
			C \delta^{3/2} d_\ell^{(N-1)/2} d_j^{-N/2+1} + Ch\delta d_\ell^{N/2} d_j^{-N/2} & (\ell \le j - 2), \\
			C \delta^{3/2} + Ch\delta & (\ell = j, j\pm1), \\
			C \delta^{3/2} d_\ell^{(1-N)/2} d_j^{N/2} + Ch\delta d_\ell^{-N/2} d_j^{N/2} & (\ell \ge j + 2),
		\end{cases}
\end{align*}
where we have used (cf.\ \cite[Lemma B.4]{KasKem2020a})
\begin{equation} \label{eq: nabla w on Gamma hl}
	\|\nabla\tilde w\|_{L^2(\Gamma_{h, \ell})} \le
	\begin{cases}
		C\|\nabla \tilde w\|_{H^1(\Omega_h)} \le C &\quad (\ell =  j, j\pm1), \\
		Cd_\ell^{(N-1)/2} \cdot d_j^{N/2} \max\{d_\ell, d_j\}^{1-N} &\quad (\ell \neq j, j\pm1).
	\end{cases}
\end{equation}
Consequently, it follows that
\begin{equation} \label{eq2: bound of (psi, g - gh)}
\begin{aligned}
	|(\mathring I_h \tilde w, \Delta \tilde g)_{\Omega_h \setminus \Omega}| &\le C \sum_{\ell=0}^{j-2} (\delta^2 d_\ell^{-1} d_j^{-N/2+1} + h\delta^{3/2} d_\ell^{-1/2} d_j^{-N/2}) + Ch\delta^{3/2} d_j^{-(N+1)/2} \\
		&\qquad + C \sum_{\ell=j+2}^{J} (\delta^2 d_\ell^{-N} d_j^{N/2} + h\delta^{3/2} d_\ell^{-N-1/2} d_j^{N/2}) \\
		&\le C \delta^2 d_j^{-N/2} + Ch \delta^{3/2} d_j^{-(N+1)/2}.
\end{aligned}
\end{equation}

The third term of \eqref{eq: start of local L2} is bounded by (note that $g \circ \bm\pi = 0$ on $\Gamma_h$)
\begin{equation} \label{eq3: bound of (psi, g - gh)}
\begin{aligned}
	&\sum_{\ell = 0}^J \|\nabla\tilde w\|_{L^2(\Gamma_{h, \ell})} \|\tilde g - g\circ\bm\pi\|_{L^2(\Gamma_{h, \ell})} \\
	\le \; &\sum_{\ell = 0}^J \|\nabla\tilde w\|_{L^2(\Gamma_{h, \ell})} (C \delta \|\nabla \tilde g\|_{L^2(\Gamma_{h, \ell})} + C \delta^{3/2} \|\nabla^2 \tilde g\|_{L^2(\bm\pi(\Gamma_{h, \ell}, \delta))}) \\
	\le \; & C \Big( \sum_{\ell = 0}^{j-2} d_\ell^{(N-1)/2} d_j^{-N/2+1} \cdot \delta d_\ell^{(1-N)/2} + \delta d_j^{(1-N)/2}
		+ \sum_{\ell = j+2}^{J} d_\ell^{(1-N)/2} d_j^{N/2} \cdot \delta d_\ell^{(1-N)/2} \Big) \\
	\le \; &C|\log h| \delta d_j^{-N/2+1} + C \delta d_j^{(1-N)/2},
\end{aligned}
\end{equation}
where we have used \eqref{eq3: boundary-skin estimates} and \eqref{eq2: boundary-skin estimates} in the second line, and \eqref{eq: nabla w on Gamma hl} and \cite[Lemma B.3]{KasKem2020a} in the third line.

Finally, recalling the argument before \eqref{eq1: bound of (psi, g - gh)}, especially $d_\ell \lesseqgtr d_j$ for $\ell \lesseqgtr j$, we get
\begin{align*}
	\|\Delta \tilde w + \psi\|_{L^2(\Omega_{h, \ell})} \le
	\begin{cases}
		C d_\ell^{N/2} d_j^{-N/2} \quad &(\ell \le j), \\
		C d_\ell^{-N/2} d_j^{N/2} \quad &(\ell \ge j+1).
	\end{cases}
\end{align*}
In addition, since $g_h = g \circ \bm\pi = 0$ on $\Gamma_h$,
\begin{align}
	\|\tilde g - g_h\|_{L^2(\Omega_{h, \ell} \setminus \Omega)} &\le C\delta^{1/2} \|\tilde g\|_{L^2(\Gamma_{h, \ell})} + C \delta \|\nabla(\tilde g - g_h)\|_{L^2(\Omega_{h, \ell} \setminus \Omega)} \tag{by \eqref{eq: RHS with Omegah minus Omega}} \\
		&\le C \delta (\|\nabla(\tilde g - g_h)\|_{L^2(\Omega_{h, \ell})} + \|\nabla \tilde g\|_{L^2(\bm\pi(\Gamma_{h, \ell}, \delta))}) \tag{by \eqref{eq3: boundary-skin estimates}} \\
		&\le C \delta \|\nabla(\tilde g - g_h)\|_{L^2(\Omega_{h, \ell})} + C \delta^{3/2} d_\ell^{(1-N)/2}. \notag
\end{align}
Therefore, we can bound the last term $(\Delta \tilde w + \psi, \tilde g - g_h)_{\Omega_h \setminus \Omega}$ of \eqref{eq: start of local L2} by
\begin{align}
	&C\sum_{\ell=0}^j d_\ell^{N/2} d_j^{-N/2} (\delta \|\nabla(\tilde g - g_h)\|_{L^2(\Omega_{h, \ell})} + \delta^{3/2} d_\ell^{(1-N)/2}) \notag \\
	+ \; &C\sum_{\ell=j+1}^J d_\ell^{-N/2} d_j^{N/2} (\delta \|\nabla(\tilde g - g_h)\|_{L^2(\Omega_{h, \ell})} + \delta^{3/2} d_\ell^{(1-N)/2}) \notag \\
	\le \; &C \delta d_j^{-N/2} \sum_{\ell=0}^J d_\ell^{N/2} \|\nabla(\tilde g - g_h)\|_{L^2(\Omega_{h, \ell})} + C \delta^{3/2} d_j^{(1-N)/2}. \label{eq4: bound of (psi, g - gh)}
\end{align}

Combining the estimates \eqref{eq1: bound of (psi, g - gh)}, \eqref{eq2: bound of (psi, g - gh)}, \eqref{eq3: bound of (psi, g - gh)}, \eqref{eq4: bound of (psi, g - gh)} with \eqref{eq: start of local L2} and noting that $\|\tilde g - g_h\|_{L^2(\Omega_{h, j})}$ equals the supremum of $(\psi, \tilde g - g_h)_{\Omega_h}$ with respect to $\psi \in C^\infty_0(\Omega_{h, j})$ such that $\|\psi\|_{L^2(\Omega_{h, j})} = 1$, we deduce that
\begin{align*}
	d_j^{N/2-1} \|\tilde g - g_h\|_{L^2(\Omega_{h, j})}
		&\le C hd_j^{-1} \sum_{\ell = 0}^J d_\ell^{N/2} \|\nabla(\tilde g - g_h)\|_{L^2(\Omega_{h, \ell})} + C (\delta^2 d_j^{-1} + h \delta^{3/2} d_j^{-3/2}) \\
		&\quad + C\delta (|\log h| + d_j^{-1/2}) + C \delta d_j^{-1} \sum_{\ell=0}^J d_\ell^{N/2} \|\nabla(\tilde g - g_h)\|_{L^2(\Omega_{h, \ell})} + C \delta^{3/2} d_j^{-1/2}.
\end{align*}
We add this for $j = 0, \dots, J$ and notice $hd_0 \le L^{-1}$ to conclude
\begin{align*}
	\sum_{j=0}^J d_j^{N/2-1} \|\tilde g - g_h\|_{L^2(\Omega_{h, j})} &\le CL^{-1} \sum_{\ell = 0}^J d_\ell^{N/2} \|\nabla(\tilde g - g_h)\|_{L^2(\Omega_{h, \ell})}
		+ C (\delta^2 h^{-1} + \delta^{3/2} h^{-1/2} + \delta |\log h|^2 + \delta h^{-1/2}).
\end{align*}
Since $\delta = Ch^{k+1}$ (in particular less than $Ch^2)$, we have shown \eqref{eq: weighted L2}, thus completing the proof of Theorem \ref{main thm}.

% 7
\section{Appendix}
% 7.1
\subsection{Proof of Proposition \ref{prop: regularized delta} (cf.\ \cite[Appendix]{SSW96})}
Fix a non-negative cut off function $\hat \omega \in C^\infty_0(\hat T)$ in the reference element $\hat T$ such that $\operatorname{supp} \hat\omega$ contains an open ball and is contained in $\operatorname{int} \hat T$.
It follows that $\operatorname{dist}(\operatorname{supp} \hat\omega, \partial\hat T) \ge C > 0$ and $\|\nabla^m \hat\omega\|_{\RED{L^\infty(\hat T)}} \le C \, (m \ge 0)$.
We set $\omega_K := \hat\omega \circ \bm F_K^{-1}$.

Noticing that $\{\phi_{\bm a}\}_{\bm a \in \mathcal N_h \cap K}$ forms a basis of $V_h|_K$, we construct its dual basis $\{\psi_{\bm b}\}_{\bm b \in \mathcal N_h \cap K}$ with respect to the inner product $(\cdot, \omega_K \cdot)_{K}$, that is,
\begin{equation*}
	\int_K \phi_{\bm a}(\bm x) \omega_K(\RED{\bm x}) \psi_{\bm b}(\bm x) \, d\bm x = \int_{\hat T} \hat\phi_i(\hat{\bm x}) \hat\omega(\hat{\bm x}) \hat\psi_j(\hat{\bm x}) J_K \, d\hat{\bm x} = \delta_{ij},
\end{equation*}
where the indices $i$ and $j$ satisfy $\bm F_T(\hat{\bm a}_i) = \bm a$, $\bm F_T(\hat{\bm a}_j) = \bm b$ and $J_K := |\operatorname{det} \nabla_{\hat{\bm x}}\bm F_K|$.

Then we define $\eta_{K, \bm z}$ by
\begin{equation*}
	\eta_{K, \bm z}(\bm x) = \sum_{\bm a \in \mathcal N_h \cap K} \phi_{\bm a}(\bm z) \omega_K(\bm x) \psi_{\bm a}(\bm x) \quad (\bm x \in K).
\end{equation*}
By direct computation one can verify $(\eta_{K, \bm z}, v_h)_K = \RED{v_h(\bm z)}$ for any $v_h \in V_h|_K$, and $\operatorname{dist}(\operatorname{supp} \omega_K, \partial K) \ge Ch_K$ (recall $\|\nabla_{\bm x} \bm F_K^{-1}\|_{L^\infty(K)} \le Ch_K^{-1}$).

Finally observe that $\psi_{\bm a} = \sum_{j=1}^{N_k} (M^{-1})_{ij} \phi_{\bm b}$ where $\bm a = \bm F_K(\hat {\bm a}_i)$, $\bm b = \bm F_K(\hat {\bm a}_j)$, and $M^{-1}$ denotes the inverse matrix of $M = ((\hat\phi_j, \hat\omega J_K \hat\phi_l)_K)_{1 \le j, l \le N_k}$.
Then the derivative estimates for $\eta_{K, \bm z}$ result from $|J_K| \approx h_K^N$ and $\|\nabla^m (\omega \phi_{\bm b})\|_{L^\infty(K)} \le Ch_K^{-m}$.
This completes the proof of Proposition \ref{prop: regularized delta}.

% 7.2
\subsection{Interpolation error estimates for $\mathring I_h$}
\begin{proof}[Proof of Lemma \ref{lem: Ih - mathring Ih}]
	In view of the inverse inequality $\|\nabla^m (v_h - \mathring I_h v_h)\|_{L^p(T)} \le Ch_T^{-m} \|v_h - \mathring I_h v_h\|_{L^p(T)}$, it is enough to consider $m = 0$.
	We see that, for $\bm x \in T$,
	\begin{equation*}
		|(v_h - \mathring I_h v_h)(\bm x)|^p = \Big| \sum_{\bm a \in \mathcal N_h^\partial \cap T} v_h(\bm a) \phi_{\bm a}(\bm x) \Big|^p \le C \|v_h\|_{L^\infty(S)}^p \sum_{\bm a \in \mathcal N_h \cap T} |\phi_{\bm a}(\bm x)|^p,
	\end{equation*}
	where we note that $\mathcal N_h^\partial \cap T \subset S$ by assumption.
	Then it follows that
	\begin{align*}
		\int_T \sum_{\bm a \in \mathcal N_h \cap T} |\phi_{\bm a}(\bm x)|^p \, d\bm x = \sum_{i=1}^{N_k} \int_{\hat T} |\hat\phi_i(\hat{\bm x})|^p |\operatorname{det} \nabla_{\hat{\bm x}}\bm F_T| \, d\hat{\bm x} \le Ch_T^N.
	\end{align*}
	Next let $T' \in \mathcal T_h$ denote the element that contains $S$.
	We may assume that $\hat S = \bm F_{T'}^{-1}(S)$, up to a further orthogonal transformation, is contained in the plane $\hat x_N = 0$.
	The Riemannian metric tensor $G$ obtained from the parametrization $(\hat{\bm x}', 0) \mapsto \bm F_{T'}(\hat{\bm x}', 0)$ satisfies $C h_{T'}^{N-1} \le \sqrt{\operatorname{det} G} \le C h_{T'}^{N-1}$.
	Then we have
	\begin{align*}
		\|v_h\|_{L^\infty(S)}^p = \|\hat v_h\|_{L^\infty(\hat S)}^p \le C \|\hat v_h\|_{L^p(\hat S)}^p &\le C h_T^{1-N} \int_{\hat S} |\hat v_h(\hat{\bm x}', 0)|^p \sqrt{\operatorname{det} G} \, d\hat{\bm x}'
			= C h_T^{1-N} \|v_h\|_{L^p(S)}^p,
	\end{align*}
	where we have used norm equivalence for $\mathbb P_k(\hat S)$ and $h_T \approx h_{T'}$ (by the shape-regularity).
	From these we conclude the desired estimate for $m = 0$.
\end{proof}

\begin{proof}[Proof of Proposition \ref{prop: I - mathring Ih local}]
	If $T \in \mathcal T_h, \, T \subset \Omega_{h, j}$ has no intersection with $\Gamma_h$, we have $(\mathring I_h v)|_T = (I_h v)|_T$ so that estimation of $\|\nabla^l (v - \mathring I_h v)\|_{L^p(T)}$ is reduced to Lemma \ref{lem: I - Ih}.
	
	If $T \cap \Gamma_h \neq \emptyset$ one can find $S_T \in \mathcal S_h$ such that $T \cap \Gamma_h \subset S_T \subset \Gamma_{h, j}$ (by the shape-regularity, $\# \{T \in \mathcal T_h \mid S_T = S\}$ is bounded independently of $h$ for all $S \in \mathcal S_h$).
	By Lemmas \ref{lem: Ih - mathring Ih} and \ref{lem: I - Ih} we obtain
	\begin{align*}
		\|\nabla^l (v - \mathring I_h v)\|_{L^p(T)} &\le \|\nabla^l (v - I_h v)\|_{L^p(T)} + \|\nabla^l (I_h v - \mathring I_h v)\|_{L^p(T)} \\
			&\le Ch_T^{m-l} \|v\|_{W^{m,p}(T)} + Ch_T^{1/p - l} \|I_h v\|_{L^p(S_T)} \\
			&\le Ch_T^{m-l} \|v\|_{W^{m,p}(T)} + Ch_T^{1/p - l} (\|I_h v - v\|_{L^p(S_T)} + \|v\|_{L^p(S_T)}).
	\end{align*}
	Let $T' \in \mathcal T_h$ be the element that contains $S_T$ (hence $T' \subset \Omega_{h, j}$).
	Applying the following trace inequality between $T'$ and $S_T$:
	\begin{equation*}
		\|f\|_{L^p(S_T)} \le C h_T^{-1/p} (\|f\|_{L^p(T')} + h_T \|f\|_{W^{1,p}(T')}) \quad (f \in W^{1,p}(T')),
	\end{equation*}
	we have $\|I_h v - v\|_{L^p(S_T)} \le C h_T^{m-1/p} \|v\|_{W^{m,p}(T')}$.
	
	It follows from $v_1 \circ \bm\pi = v_2 = 0$ on $\Gamma_h$ and $\delta^{1-1/p} \le 1$ that
	\begin{align*}
		\|v\|_{L^p(S_T)} &= \|v_1 - v_1 \circ \bm\pi\|_{L^p(S_T)} \le C \delta^{1-1/p} \|\nabla v_1\|_{L^p(\bm\pi(S_T, \delta))} \\
			&\le C \delta \|\nabla v_1\|_{L^p(S_T)} + C \delta^{2-1/p} \|\nabla^2 v_1\|_{L^p(\bm\pi(S_T, \delta))} \\
			&\le Ch_T^{-1/p}\delta (\|v_1\|_{W^{1,p}(T')} + h_T \|v_1\|_{W^{2,p}(T')}) + C \delta \|\nabla^2 v_1\|_{L^p(\bm\pi(S_T, \delta))}.
	\end{align*}
	Combination of the above estimates, together with $\delta \le Ch_T^{k+1}$, leads to
	\begin{align*}
		\|\nabla^l (v - \mathring I_h v)\|_{L^p(T)} &\le Ch^{m-l} (\|v\|_{W^{m,p}(T)} + \|v\|_{W^{m,p}(T')}) \\
			&\qquad + C h^{k+1-l} (\|v_1\|_{W^{2,p}(T')} + \|\nabla^2 v_1\|_{L^p(\bm\pi(S_T, \delta))}).
	\end{align*}
	Raising both sides to the power $p$ and adding for $T \subset \Omega_{h, j}$, we conclude \eqref{eq: I - mathring Ih local}.
\end{proof}

\section*{Acknowledgments}
This is a manuscript submitted for the Proceedings of MSJ-KMS Joint Meeting 2023.
The author thanks the anonymous referee for valuable comments on this paper.
This work was supported by Grant-in-Aid for Early-Career Scientists (No.\ 20K14357) and Grant-in-Aid for Scientific Research(C) (No.\ 24K06860) of the Japan Society for the Promotion of Science (JSPS).

% Apx A
%\appendix
%\section{Appendix}

% acknoledgements
%\section*{Acknowledgements}
%The authors thank Professor ...
%This work was supported by ...

% bibliography
%\bibliographystyle{siam}
%\bibliography{bibsrc}

% EOD
\end{document}